\numberwithin{equation}{section}
\def\C{\mathbb C}
\def\N{\mathbb N}
\def\T{\mathbb T}
\def\Z{\mathbb Z}
\def\K{\mathcal K}
\def\aut{\operatorname{Aut}}
\def\id{\operatorname{id}}
\def\prim{\operatorname{Prim}}
\def\pol{\mathcal{O}}
\def\lra{\longrightarrow}
\def\ra{\rightarrow}
\def\sw#1{{\sb{(#1)}}}
\def\suc#1{{\sp{(#1)}}} 
\def\sut#1{{\sp{\langle #1\rangle}}} 
\def\tens{\mathop{\otimes}} 
\def\ot{\mathop{\otimes}}
\def\haar{\mathfrak{hm}}
\def\eps{\varepsilon}
\def\can{\mathsf{can}}
\def\flip{\mathsf{flip}}
\newtheorem{theo}{Theorem}[section]
\newtheorem{lemm}[theo]{Lemma}
\newtheorem{prop}[theo]{Proposition}
\newtheorem{defi}[theo]{Definition}
\newtheorem{rema}[theo]{Remark}
\newcounter{zlist}
\newcounter{blist}
\newenvironment{blist}{\begin{list}{(\alph{blist})}{
\usecounter{blist}\leftmargin2.5em\labelwidth2em\labelsep0.5em
\topsep0.6ex
\parsep0.3ex plus0.2ex minus0.1ex}}{\end{list}}
\newcounter{rlist}
\title[Quantum Sphere Bundle]{The Quantum Flag Manifold $\mathbf{SU_q(3)/\T^2}$ as an Example of a Noncommutative Sphere Bundle}
 \author{Tomasz Brzezi\'nski}
 \address{ Department of Mathematics, Swansea University, 
Swansea University Bay Campus,
Fabian Way,
Swansea,
  Swansea SA1 8EN, U.K.\ 
\& Department of Mathematics, University of Bia\l ystok, K.\ Cio\l kowskiego 1M, 15--245 Bia\l ystok, Poland} 
  \email{T.Brzezinski@swansea.ac.uk}   
\author{Wojciech Szyma\'nski}
\address{Department of Mathematics and Computer Science, University of Southern Denmark, 
Campusvej 55, 5230 Odense M, Denmark} 
\email{szymanski@imada.sdu.dk} 
\thanks{The first named author would like to express his gratitude to the members of the Department of Mathematics and Computer Science, University of Southern Denmark in Odense for very warm hospitality. His research  is supported in part by the Polish National Science Centre grant 2016/21/B/ST1/02438.
The second named author was  supported by  the 
DFF-Research Project 2, `Automorphisms and invariants of operator algebras', Nr. 7014--00145B, 2017--2021, and 
 the Villum Fonden Research Project `Local and global structures of groups and their algebras' (2014--2018).}
\date\today
\begin{document}

\begin{abstract}
The quantum flag manifold ${SU_q(3)/\T^2}$ is interpreted as a noncommutative bundle over the quantum complex projective plane with the quantum or Podle\'s sphere as a fibre. A connection arising from the (associated) quantum principal $U_q(2)$-bundle is described. 
\end{abstract}

\maketitle

\addtocounter{section}{-1}

\section{Introduction}
It is fairly well understood what a noncommutative principal circle bundle is (albeit with a commutative fibre). On the purely algebraic level, at which noncommutative principal bundles are understood as faithfully flat Hopf-Galois extensions or principal comodule algebras \cite{brzma}, \cite{h}, \cite{brzha1}, \cite{brzha2}, \cite{HajKra:pie}, a noncommutative principal circle bundle is simply a strongly $\mathbb{Z}$-graded algebra \cite{NasVan:gra}; the appearance of the $\mathbb{Z}$-grading here accounts for the Pontryagin duality between the circle group and the group of integers, while the strength of the grading reflects the freeness of the circle action. On the $C^*$-algebra level, as indicated in \cite{BegBrz:lin} and fully explored in  \cite{akl}, a noncommutative principal circle bundle can be understood as the Pimsner algebra associated to a Morita self-equivalence module or a noncommutative line bundle. Alas, what should play the role of a noncommutative bundle with the quantum sphere \cite{Pod:sph} or, equivalently, the quantum complex projective line \cite{vs} as a fibre does not appear to be understood at all. In this paper we try to address this lack of  understanding by looking closely at the quantum deformation of one of the simplest examples of sphere bundles, that of the flag manifold ${SU(3)/\T^2}$ over the complex projective plane. While the quantum sphere $S_q^2 \cong \C P_q^1$, just as its classical counterpart, is not a quantum group it is a quantum homogeneous space of the quantum unitary group $U_q(2)$, and we explore this richness of its algebraic structure to associate it to the noncommutative principal $U_q(2)$ bundle $SU_q(3) \lra \C P^2_q$. The total space of this associated quantum sphere bundle is identified with the quantum flag manifold ${SU_q(3)/\T^2}$. Admittedly this example cannot be taken as a beacon leading to a general method of approaching quantum sphere bundles, indeed only in a very limited number of cases the homogeneous space nature of the quantum sphere can be explored, it is nonetheless indicative of what one might expect of noncommutative sphere bundles and how they can be employed to gain better insight into noncommutative spaces. 

The paper is organised as follows. In the next section we recall elements of the theory of noncommutative principal and associated bundles and connections on them. In particular we list necessary Hopf algebra preliminaries and outline the way in which an idempotent for the module of sections of an associated bundle stems from a strong connection on a principal comodule algebra. The case of principal comodule algebras that arise from Hopf algebra epimorphisms or quantum homogeneous spaces is dealt with in detail. Section~2 recalls the construction of the quantum flag manifold ${SU_q(3)/\T^2}$, which is based on the use of (the $C^*$-algebra of continuous functions on) the quantum unitary group $U_q(3)$ and the gauge action by the two-torus. The main results of the paper are contained in Sections 3 and 4. The former is devoted to careful description of the quantum flag manifold ${SU_q(3)/\T^2}$ as a quantum sphere or $\C P^1_q$-bundle over the complex quantum  projective plane $\C P^2_q$. This is achieved in several steps. First, by analysing the primitive ideal structure of the $C^*$-algebra  $C(SU_q(3)/\T^2)$ we observe that  it is not isomorphic to the tensor
product $C(\C P_q^1)\otimes C(\C P_q^2)$. Thus a more elaborate construction needs to be sought if the quantum sphere bundle nature of ${SU_q(3)/\T^2}$ is to be revealed. Second, we interpret the algebra of continuous functions on the quantum projective plane $C(\C P_q^2)$ as a specific subalgebra of $C(SU_q(3)/\T^2)$. Third, working on the level of coordinate algebras we show in Lemma~\ref{lem.u2-bundle} that $\pol(SU_q(3)$ is a principal $\pol(U_q(2)$-comodule algebra over $\pol(\C P_q^2)$. In terms of quantum spaces this means that there is a quantum principal bundle
\begin{equation}\label{u2.su3.cp2}
\xymatrix{
U_q(2) \ar[r] & SU_q(3) \ar[d] \\
& \C P_q^2.}
\end{equation}
The embedding of $U_q(2)$ into $SU_q(3)$ that leads to the above bundle is shown to be compatible with the embedding of the two-torus into $SU_q(3)$ that yields the quantum flag manifold (in terms of coordinate $*$-algebras this is a statement of compatibility of corresponding epimorphisms of Hopf $*$-algebras, Lemma~\ref{lem.su3.u2}). This compatibility is then crucial for establishing that the algebra of coordinate functions on the quantum sphere i.e.\ on the quantum homogeneous space of $U_q(2)$ arising from the two-torus inclusion, when associated as a fibre to bundle \eqref{u2.su3.cp2}, gives rise to the quantum sphere bundle over $\C P_q^2$, with the quantum flag manifold as the total space,
\begin{equation}\label{cp1.fm.cp2}
\xymatrix{
\C P_q^1 \ar[r] & SU_q(3)/\T^2 \ar[d] \\
& \C P_q^2.}
\end{equation} 
This is established in Theorem~\ref{prop.frame}, which is the main result of Section~3, and, indeed, the key result of the paper. In the subsequent Lemma~\ref{lem.fm.j} we identify precisely $\C P_q^1$ as a fibre of 
 this quantum sphere bundle. Section~3 is completed by explicit calculation of the connection in \eqref{cp1.fm.cp2} obtained from a strong connection in \eqref{u2.su3.cp2}.


\section{Noncommutative bundles and connections}

\subsection{Notation, conventions and Hopf algebra preliminaries}
All vector spaces, algebras etc.\ are over the field $\C$ of complex numbers. The unadorned tensor product is over $\C$. By an algebra we mean a unital, associative algebra over $\C$. In an algebra $A$, the multiplication map is denoted by $\mu_A$ (or by juxtaposition of elements) and the identity is denoted by $1_A$ (or simply 1). Most of the algebras discussed will be $*$-algebras. We recall that a left (resp.\ right) $A$-module is projective if the multiplication (action) map has a splitting that is an $A$-module homomorphism. A left $A$-module $M$ is {\em faithfully flat} if the exactness of any sequence of {\em right} $A$-modules is equivalent to the exactness of this sequences tensored (over $A$) with $M$. 

Hopf algebras are always assumed to have bijective antipodes. The comultiplication in a Hopf algebra $H$ is denoted by $\Delta_H$, the counit by $\eps_H$ and the antipode by $S_H$. Whenever needed, we will us the Sweedler notation for comultiplication, for all $h\in H$,
$$
\Delta_H(h) = \sum h\sw 1\ot h\sw 2, \qquad (\id \ot \Delta_H)\circ \Delta_H(h) = \sum h\sw 1\ot h\sw 2\ot h\sw 3,
$$
etc.  Hopf algebras of (noncommutative) coordinate functions on quantum groups are denoted as $\pol(G_q)$, where $G$ is a classical group, the subscript $q$ indicates that it is a quantum version of $G$ and $\pol$ indicates that it is a Hopf algebra deforming functions on $G$. In all such cases we simplify the notation and write $\Delta_{G_q}$ for $\Delta_{\pol(G_q)}$, etc.

If $H$ is a Hopf algebra, by a {\em right $H$-comodule} we mean a vector space $V$ together with a map $\varrho_V: V\to V\ot H$, known as {\em coaction}, such that
$$
(\varrho_V\ot\id)\circ \varrho_V = (\id\ot\Delta_H)\circ \varrho_V, \qquad (\id \ot \eps_H)\circ\varrho_V = \id.
$$
On elements of $V$, the right coaction is denoted in the Sweedler-esque way: $\varrho_V(v) = \sum v\sw 0\ot v\sw 1$. 

Similarly, a left $H$-comodule is a vector space $W$ with a (counital and coassociative) map $\lambda_W: W\to H\ot W$. Given a right $H$-comodule $V$ and a left $H$-comodule $W$, their {\em cotensor product} is the vector subspace $V\Box_H W$ of $V\ot W$ defined as
$$
V\Box_H W := \{\sum_i v_i\otimes w_i\;|\; \sum_i \varrho_V(v_i)\ot w_i = \sum_i v_i\otimes \lambda_W(w_i)\}.
$$
If $H=\pol(G_q)$ then we abbreviate $V\Box_{\pol(G_q)}W$ to $V\Box_{G_q}W$.

An algebra $A$ that is also a right comodule of a Hopf algebra $H$ is called a {\em right comodule algebra} if the coaction $\varrho_A$ is an algebra homomorphism, where $A\otimes H$ is equipped with the standard tensor product algebra structure,
$$
(a\otimes h)(a'\otimes h') = aa'\otimes hh', \qquad 1_{A\otimes H} = 1_A\ot 1_H.
$$
Since the coaction $\varrho_H$ is an algebra map, the subspace 
$$
B = A^{\mathrm{co} H} = \{b\in A,\;| \; \varrho_A(b) = b\ot 1_H\}
$$
is a subalgebra of $A$ known as the {\em coaction invariant subalgebra} or {\em coinvariant subalgebra}. By the same token, $\varrho_A$ is a homomorphism of left $B$ modules (where $A$ is a left $B$-module by the restriction of the multiplication map, and $B$ acts $A\otimes H$ by multiplication in $A$ and trivially on $H$). In many cases, we deal with comodule algebras which are (noncommutative) coordinate algebras $\pol(X_q)$ of quantum spaces $X_q$ on which there are actions of quantum groups $G_q$. Algebraically this means the coaction of $\pol(G_q)$ on $\pol(X_q)$. In such cases $\varrho_{\pol(X_q)}$ is abbreviated to $\varrho_{X_q}$.

All Hopf algebras of type $\pol(G_q)$ we deal with admit a normalised left integral or a {\em Haar measure}. Recall that a normalised left integral on a Hopf algebra $H$ is a linear map $\haar: H\to \C$ such that, for all $h\in H$,
\begin{equation}\label{haar}
\sum h\sw 1\,\haar(h\sw 2) = \haar(h) 1_H, \qquad \haar(1_H) =1.
\end{equation}
If $A$ is a comodule algebra of a Hopf algebra $H$ with a Haar measure $\haar$, then one can construct the {\em averaging map}
 \begin{equation}\label{average}
 E: A\longrightarrow A^{\mathrm{co} H}, \qquad E= (\id_A\ot \haar)\circ \varrho_A.
 \end{equation}
Note that since a Haar measure is normalised, if $b\in A^{\mathrm{co} H}$, $E(b) =b$, and since $\varrho_A$ is left-linear over $A^{\mathrm{co} H}$, the map $E$ is a left-linear retraction of the inclusion $A^{\mathrm{co} H}\subseteq A$.

\subsection{Principal comodule algebras, associated modules and connections} 
The systematic study of connections in quantum principal bundles has been initiated in \cite{brzma}. In order to capture more faithfully the geometric contents of connections in noncommutative geometry this theory was developed further by the introduction of {\em strong connections} in \cite{h}, \cite{dgh}.  Motivated by the need to interpret examples arising from quantum homogeneous spaces, most notably quantum spheres, the theory was extended beyond Hopf algebras in \cite{BrzMaj:coa}, \cite{BrzMaj:geo}. Finally this approach to noncommutative principal bundles was  formalised in terms of {\em principal comodule algebras} e.g.\ in \cite{HajKra:pie} and {\em principal coalgebra extensions} in \cite{brzha0}. The details of proofs of the results quoted in this section can be found in \cite[Sections~5~\&~6]{Brz:ToK}.

\begin{defi}\label{def.princ.com.alg}
Let $H$ be a Hopf algebra and let $A$ be a right $H$-comodule algebra that is faithfully flat as a left module over its coinvariants subalgebra $B:= A^{\mathrm{co}H}$. $A$ is called a {\em principal comodule algebra} if the following {\em canonical Galois map}
\begin{equation}\label{can} 
\can : A\ot_B A \longrightarrow A\ot H, \qquad a\ot a'\longmapsto (a\otimes 1)\varrho_A(a'),
\end{equation}
is bijective.
\end{defi}

A comodule algebra with bijective canonical map is known as a {\em Hopf-Galois extension} (of the subalgebra of its coinvariants), introduced as a non-commutative generalisation of the classical Galois theory in \cite{ChaSwe:Hop} and \cite{KreTak:Gal}. Thus a principal comodule algebra is the same as a faithfully flat Hopf-Galois extension (by a Hopf algebra with bijective antipode). The (algebro-) geometric meaning of faithfully flat Hopf-Galois extensions was first analysed in detail by Schneider in \cite{Sch:pri}. 

The geometric meaning to principal comodule algebras and a useful way of determining whether a comodule algebra is principal is provided by the following lemma.

\begin{lemm}\label{lem.strong}
A right $H$-comodule algebra $A$ is a principal comodule algebra if and only if there exists a map $\ell: H\to A\ot A$ such that
\begin{subequations}\label{str.con}
\begin{equation}\label{con.norm}
\ell (1_H) = 1_A\otimes 1_A \qquad \mbox{(normalisation)},
\end{equation}
\begin{equation}\label{con.split}
\mu_A \circ \ell = 1_A\circ \eps_H  \qquad \mbox{(splitting property)}, 
\end{equation}
\begin{equation}\label{con.right}
(\id \otimes \varrho_A) \circ \ell = (\ell\otimes \id)\circ\Delta_H \qquad \mbox{(right colinearity)}, 
\end{equation}
\begin{equation}\label{con.left}
(\lambda_A \otimes\id) \circ \ell = (\id\otimes \ell)\circ\Delta_H  \qquad \mbox{(left colinearity)}, 
\end{equation}
\end{subequations}
where $\varrho_A: A\to A\otimes H$ is the right $A$-coaction and $\lambda_A: A\to H\otimes A$ is the associated left $H$-coaction,
\begin{equation}\label{left.coa}
\lambda_A = (S^{-1}\otimes \id)\circ \flip \circ \varrho_A, \qquad a\mapsto \sum S^{-1}(a\sw 1)\otimes a\sw 0.
\end{equation}
\end{lemm}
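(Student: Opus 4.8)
The plan is to prove the two implications in turn, the common thread being an explicit formula for the inverse of the canonical Galois map \eqref{can}; write $\ell(h)=\sum\ell(h)\su 1\otimes\ell(h)\su 2$ throughout.

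\emph{Strong connection $\Rightarrow$ principal.} Assume a map $\ell$ satisfying \eqref{str.con} is given and set
\begin{equation*}
\tau\colon A\otimes H\lra A\otimes_B A,\qquad a\otimes h\longmapsto\sum a\,\ell(h)\su 1\otimes_B\ell(h)\su 2 .
\end{equation*}
I would first verify $\can\circ\tau=\id_{A\otimes H}$: unpacking the definitions and using right colinearity \eqref{con.right} rewrites $\can(\tau(a\otimes h))$ as $\sum a\,\mu_A(\ell(h\sw 1))\otimes h\sw 2$, whereupon the splitting property \eqref{con.split} and counitality collapse it to $a\otimes h$. The crux of this direction is the observation that for every $a\in A$ the element $\xi(a):=\sum a\sw 0\,\ell(a\sw 1)\su 1\otimes\ell(a\sw 1)\su 2\in A\otimes A$ in fact lies in $B\otimes A$: applying $\varrho_A$ to its first tensorand and invoking left colinearity \eqref{con.left} in the repackaged form $(\varrho_A\otimes\id_A)\circ\ell=\bigl[h\mapsto\sum\ell(h\sw 2)\su 1\otimes S_H(h\sw 1)\otimes\ell(h\sw 2)\su 2\bigr]$ (a direct consequence of \eqref{left.coa}), and then contracting with the antipode identity $\sum h\sw 1 S_H(h\sw 2)\otimes h\sw 3=1_H\otimes h$ — legitimate because the Sweedler legs of $a$ behave coassociatively — one obtains $(\varrho_A\otimes\id_A)\xi(a)=\xi(a)\otimes 1_H$, which places $\xi(a)$ in $A^{\mathrm{co}H}\otimes A=B\otimes A$. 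Granting this, the ``translation identity'' $\sum a\sw 0\,\ell(a\sw 1)\su 1\otimes_B\ell(a\sw 1)\su 2=1_A\otimes_B a$ is immediate — slide the now coinvariant first tensorand across $\otimes_B$ and apply \eqref{con.split} — and, since both $\can$ and $\tau$ are left $A$-linear, this identity is exactly $\tau\circ\can=\id_{A\otimes_B A}$. Hence $\can$ is bijective. The same membership $\xi(a)\in B\otimes A$ shows that $a\mapsto\xi(a)$ is a left $B$-linear, right $H$-colinear section of the multiplication $B\otimes A\to A$ (colinearity from \eqref{con.right}, the section property from \eqref{con.split}), i.e.\ $A$ is $H$-equivariantly projective over $B$; by a structure theorem of Schneider — where bijectivity of $S_H$ enters — this forces $A$ to be faithfully flat over $B$, so $A$ is a principal comodule algebra. (The normalisation \eqref{con.norm} plays no role in this direction.)

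\emph{Principal $\Rightarrow$ strong connection.} Conversely, suppose $A$ is principal. I would start from the translation map $\tau_H:=\can^{-1}(1_A\otimes{-})\colon H\to A\otimes_B A$, $\tau_H(h)=\sum h\su 1\otimes_B h\su 2$, whose standard properties — the $\otimes_B$-analogues of \eqref{con.norm}--\eqref{con.left}, such as $\sum h\su 1 h\su 2\sw 0\otimes h\su 2\sw 1=1_A\otimes h$, $\sum h\su 1 h\su 2=\eps_H(h)1_A$ and the two colinearity relations — follow formally from $\can\circ\can^{-1}=\id$, $\can^{-1}\circ\can=\id$ and the comodule algebra axioms. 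The remaining task is to lift $\tau_H$ through the canonical surjection $\pi\colon A\otimes A\twoheadrightarrow A\otimes_B A$ to a map $\ell$ still obeying \eqref{str.con}, and this is where faithful flatness is indispensable: by Schneider's theorem (bijective antipode again) a faithfully flat Hopf--Galois extension is $H$-equivariantly projective, so there is a left $B$-linear, right $H$-colinear section $s\colon A\to B\otimes A$ of the multiplication. Writing $s(y)=\sum y'\otimes y''$ with $y'\in B$, the rule $x\otimes_B y\mapsto\sum x y'\otimes y''$ is a well-defined section $\sigma$ of $\pi$ (well-definedness uses precisely the left $B$-linearity of $s$), and I would put $\ell:=\sigma\circ\tau_H$. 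Then $\pi\circ\ell=\tau_H$, properties \eqref{con.split}--\eqref{con.left} for $\ell$ descend from the corresponding properties of $\tau_H$ together with the $B$-linearity and $H$-colinearity of $s$, and the normalisation \eqref{con.norm} — equivalently $\ell(1_H)=s(1_A)$, an element lying automatically in $B\otimes B$ with product $1_A$ — is secured by a routine adjustment of $s$ near $1_A$.

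\emph{Where the difficulty lies.} Modulo bookkeeping with Sweedler notation, the two genuinely substantive points are: (i) the identity $(\varrho_A\otimes\id_A)\xi(a)=\xi(a)\otimes 1_H$ placing $\xi(a)$ in $B\otimes A$ — the one place where left colinearity \eqref{con.left} and invertibility of the antipode are both essential, and from which both the bijectivity of $\can$ and the equivariant projectivity flow; and (ii), in the converse direction, the passage from faithful flatness to the equivariantly projective splitting $s$, which is Schneider's structure theorem for faithfully flat Hopf--Galois extensions — I would cite this rather than reprove it. The normalisation is a minor technical nuisance, not a real obstacle.
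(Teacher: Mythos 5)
First, a point of calibration: the paper does not prove this lemma at all --- it is quoted as known, with proofs deferred to \cite[Sections~5~\&~6]{Brz:ToK} --- so there is no in-paper argument to compare against, and your proposal has to be measured against the standard argument from that literature. Your forward direction is that standard argument and is essentially complete: the explicit inverse $\tau$ of $\can$, the computation showing $\xi(a)=\sum a\sw 0\,\ell(a\sw 1)\in B\ot A$ via the repackaged left colinearity $(\varrho_A\ot\id)\circ\ell(h)=\sum \ell(h\sw 2)\su 1\ot S(h\sw 1)\ot \ell(h\sw 2)\su 2$ and the identity $\sum h\sw 1 S(h\sw 2)\ot h\sw 3=1_H\ot h$, and the appeal to Schneider's theorem \cite{Sch:pri}, \cite{brzha0} to pass from the Galois condition plus equivariant projectivity to faithful flatness. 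That last appeal is legitimate and matches the paper's own practice (it cites Schneider rather than proving Proposition~\ref{prop.qhs}). A small quibble: once you compose with $S$ to repackage \eqref{con.left}, only $S$ is used in that computation; $S^{-1}$ enters through the definition \eqref{left.coa} of $\lambda_A$ and, more seriously, inside Schneider's theorem.

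The one genuine soft spot is the normalisation in the converse direction. With $\ell:=\sigma\circ\tau_H$ you get $\ell(1_H)=s(1_A)$, an element of $B\ot B$ whose product is $1_A$ but which need not equal $1_A\ot 1_A$, and the claim that this is ``secured by a routine adjustment of $s$ near $1_A$'' is not substantiated --- the obvious adjustments fail. For instance, replacing $\ell$ by $\tilde\ell(h)=\ell(h)+\eps_H(h)\bigl(1\ot 1-\ell(1_H)\bigr)$ restores \eqref{con.norm} and preserves \eqref{con.split}, but comparing $(\id\ot\varrho_A)\circ\tilde\ell$ with $(\tilde\ell\ot\id)\circ\Delta_H$ leaves the discrepancy $\bigl(1\ot 1-\ell(1_H)\bigr)\ot\bigl(h-\eps_H(h)1_H\bigr)$, which vanishes for all $h$ only if $\ell(1_H)=1\ot 1$ already; analogous local modifications of $s$ break its left $B$-linearity. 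What is actually needed is a \emph{unital} equivariant splitting, $s(1_A)=1_A\ot 1_A$, and producing one from an arbitrary equivariant splitting requires a further argument (e.g.\ extending the morphism $b\mapsto (b\ot 1)(s(1_A)-1\ot 1):B\to\ker\mu$ along $B\hookrightarrow A$ in the category of left $B$-modules right $H$-comodules, or invoking the form of the Schneider-type equivalence in \cite{brzha0} that delivers a unital splitting directly; in the homogeneous-space situation the paper itself gets unitality for free from $\haar(1)=1$ in \eqref{inv.j}--\eqref{sec}). So as written this step is a gap --- a reparable one, and a known technical point rather than a flaw in the strategy, but it is precisely the place where the proof cannot be waved through.
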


A map $\ell$ satisfying properties \eqref{str.con} in Lemma~\ref{lem.strong} is known as a {\em strong connection} or, more precisely, as a {\em strong connection form}. It is worth pointing out that right and left colinearity properties imply that 
\begin{equation}\label{middle}
(\id_A \ot \mu_A\ot \id_A)\circ (\ell \ot \ell)\circ \Delta_H \left(H\right)\subseteq  A\otimes A^{\mathrm{co} H} \otimes A\ .
\end{equation}

Let $A$ be a principal $H$-comodule algebra $A$ and let $V$ be a left $H$-comodule. The {\em module associated to $A$ and $V$} is defined as 
$$
\Gamma(A,V) := A\Box_H V.
$$
Since the coaction $\varrho_A$ is left linear over the coinvariant subalgebra $A^{\mathrm{co} H}$, $\Gamma(A,V)$ is a left $A^{\mathrm{co} H}$-module. While $A$ is interpreted as a noncommutative principal bundle, $\Gamma(A,V)$ is a module of sections of the associated bundle.

\begin{lemm}\label{lem.proj}
Let $A$ be a principal $H$-comodule algebra, and let $B:=A^{\mathrm{co}H}$. Then, for any left $H$-comodule $V$, the associated left $B$-module $\Gamma(A,V)=A\Box_H V$ is projective. Furthermore, if $V$ is finite-dimensional, then $\Gamma(A,V)$ is a finitely generated module.

Explicitly, if $\ell: H\to A\ot A$ is a strong connection form, then the map
\begin{equation}\label{split}
\sigma: \Gamma(A,V)\lra B\ot \Gamma(A,V), \qquad \sum_ia^i\ot v^i\mapsto \sum a^i\sw 0\ell(a^i\sw 1)\ot v^i,
\end{equation}
is a left $B$-module splitting of the multiplication map $B\ot \Gamma(A,V)\to  \Gamma(A,V)$.
\end{lemm}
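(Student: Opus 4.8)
The plan is to adopt the map $\sigma$ displayed in \eqref{split} as the witness for projectivity, so that the whole content of the proof is to check that $\sigma$ really is a well-defined left $B$-module map $\Gamma(A,V)\to B\ot\Gamma(A,V)$ and a section of the multiplication map $B\ot\Gamma(A,V)\to\Gamma(A,V)$; projectivity of $\Gamma(A,V)$ is then automatic, since it becomes a direct summand of the free left $B$-module $B\ot\Gamma(A,V)$. To set up, I would invoke Lemma~\ref{lem.strong}: principality of $A$ provides a strong connection form $\ell\colon H\to A\ot A$, which I fix once and for all and write as $\ell(h)=\sum\ell(h)\su 1\ot\ell(h)\su 2$; I also write the left $H$-coaction on $V$ as $\lambda_V(v)=\sum v\sw{-1}\ot v\sw 0$. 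Every manipulation below is an application of the axioms \eqref{str.con} for $\ell$ together with the relation $\sum_i\varrho_A(a^i)\ot v^i=\sum_i a^i\ot\lambda_V(v^i)$ that defines $A\Box_H V$.

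The first and main step is well-definedness. Applying $\id\ot\ell\ot\id$ to the defining relation of $A\Box_H V$ and then multiplying the outer $A$-factor into the first leg of $\ell$ rewrites, for $\omega=\sum_i a^i\ot v^i\in\Gamma(A,V)$,
$$
\sigma(\omega)=\sum_i a^i\,\ell(v^i\sw{-1})\su 1\ot\ell(v^i\sw{-1})\su 2\ot v^i\sw 0,
$$
an element of $A\ot A\ot V$ which must be shown to lie in $B\ot(A\Box_H V)$. Equivalently, two facts have to be established: (a) $(\varrho_A\ot\id\ot\id)\sigma(\omega)$ carries $1_H$ in its middle slot; (b) $(\id\ot\varrho_A\ot\id)\sigma(\omega)=(\id\ot\id\ot\lambda_V)\sigma(\omega)$. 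For (a) I would apply $\varrho_A$ to the leftmost factor, use the left colinearity \eqref{con.left} together with the formula \eqref{left.coa} for $\lambda_A$ to transport the coaction onto the $H$-coordinate, and then let coassociativity of $\lambda_V$ and the antipode axiom $\sum h\sw 1 S(h\sw 2)=\eps_H(h)1_H$ collapse it to $1_H$. For (b) I would apply the coaction to the middle factor and use the right colinearity \eqref{con.right} together with coassociativity of $\lambda_V$ to recognise the result as $(\id\ot\id\ot\lambda_V)\sigma(\omega)$. Concretely, on fixing a basis $\{e_j\}$ of $V$ with $\lambda_V(e_j)=\sum_k t^k_j\ot e_k$, (a) reduces to $\sum_k t^k_j S(t^l_k)=\delta^l_j 1_H$ and (b) to $\sum_k t^k_j\ot t^l_k=\Delta_H(t^l_j)$.

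Granting well-definedness, the remainder is quick. Left $B$-linearity of $\sigma$ holds because $\varrho_A(ba^i)=ba^i\sw 0\ot a^i\sw 1$ for $b\in B$, so $b$ passes straight through the formula onto the $B$-factor; and $\sigma$ is a section of the multiplication map by the splitting property \eqref{con.split}, since multiplying the first two tensor factors of the rewritten $\sigma(\omega)$ returns $\sum_i a^i\eps_H(v^i\sw{-1})\ot v^i\sw 0=\sum_i a^i\ot v^i=\omega$ by counitality of $\lambda_V$. This establishes projectivity. For the finiteness claim when $\dim_\C V<\infty$, I would not use $\sigma$ but faithful flatness of $A$ over $B$, which is part of the definition of a principal comodule algebra: bijectivity of the canonical Galois map \eqref{can} gives, via the Galois-descent formalism recorded in \cite[Sections~5~\&~6]{Brz:ToK}, an isomorphism of left $A$-modules $A\ot_B\Gamma(A,V)\cong A\ot V$, and the right-hand side is free of rank $\dim_\C V$. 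Expressing its finitely many $A$-module generators as finite $A$-combinations of elements $1\ot\eta_1,\dots,1\ot\eta_N$ with $\eta_k\in\Gamma(A,V)$, the $B$-submodule $\Gamma'\subseteq\Gamma(A,V)$ generated by $\eta_1,\dots,\eta_N$ satisfies $A\ot_B\Gamma'=A\ot_B\Gamma(A,V)$ (flatness gives the inclusion, the choice of generators the surjection), whence $\Gamma'=\Gamma(A,V)$ by faithful flatness.

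The genuine obstacle is step (a)--(b): the bookkeeping is delicate because the two legs of $\ell$ stay entangled throughout, so one must be scrupulous about which tensor factor is carrying the coaction before the antipode or counit identities are brought in, and about the iterated use of coassociativity of $\lambda_V$. An equivalent and perhaps more transparent route, which I would at least mention, repackages (a)--(b) into an explicit idempotent $E\in M_N(B)$ assembled from $\ell$ and the matrix $(t^k_j)$ of $\lambda_V$, with $\Gamma(A,V)\cong E\,B^N$; this delivers projectivity and finite generation simultaneously, at the price of the same index computation.
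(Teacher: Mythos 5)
The paper does not prove Lemma~\ref{lem.proj} itself --- it defers to \cite[Sections~5~\&~6]{Brz:ToK} --- and your argument is precisely the standard proof recorded there: verify that $\sigma$ lands in $B\ot\Gamma(A,V)$ via the left and right colinearity axioms \eqref{con.left}--\eqref{con.right} (after rewriting $\sigma$ through the cotensor relation), check $B$-linearity and the splitting property \eqref{con.split}, and handle finite generation either by faithfully flat descent from $A\ot_B\Gamma(A,V)\cong A\ot V$ or by the explicit idempotent of Remark~\ref{rem.idem}. The checks (a) and (b) are carried out correctly, so the proposal is sound and takes essentially the same route the paper relies on.
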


As is well known, \cite{cq}, a splitting $\sigma$ induces a connection or a covariant derivative with values in the bimodule of universal one-forms $\Omega^1 B$ (the kernel of the multiplication map $B\ot B\to B$), by
\begin{equation}\label{Levi.Civita}
\nabla : \Gamma(A,V)\lra \Omega^1B\ot_B  \Gamma(A,V)\subset B \ot \Gamma(A,V), \qquad x\mapsto 1\ot x - \sigma(x).
\end{equation}
The connection property means that $\nabla$ satisfies the Leibniz rule in the form
$$
\nabla(b s) = d(b)s + b\nabla(s), \qquad \forall b\in  B, s\in \Gamma(A,V),
$$
where 
\begin{equation}\label{univ.dif}
d: B\longrightarrow \Omega^1 B, \qquad b \longmapsto 1\otimes b - b\otimes 1,
\end{equation}
 is the universal exterior derivative.

\begin{rema}\label{rem.idem}
\rm
If $V$ is a finite-dimensional comodule and $H$ has a Haar measure, then the idempotent in the algebra of matrices with entries in $B$ that defines $\Gamma(A,V)$ can be given explicitly as follows (see \cite{BohBrz:rel} or \cite[Section~7]{Brz:ToK}). First, we choose a basis $\{v_i\}$ of $V$. This choice uniquely determines elements $e_{ij}\in H$ by $\lambda_V(v_i) = \sum_{j} e_{ij}\otimes v_j$, which have the {\em coidempotent property} in the sense that $\Delta_H(e_{ij}) = \sum_ke_{ik}\ot e_{kj}$, $\eps_H(e_{ij}) = \delta_{ij}$. Next, we compute the value of a strong connection $\ell:H\to A\ot A$ on the $e_{ij}$,
$$
\ell(e_{ij}) = \sum_\nu \ell(e_{ij})\sut 1_\nu \ot \ell(e_{ij})\sut 2_\nu,
$$
and choose a (finite) basis $\{x_a\}$ for the subspace of $A$ generated by the $\ell(e_{ij})\sut 1_\nu$. Then, using the dual basis $\{\xi_a\}$ to $\{x_a\}$, we define the maps $\ell_a = (\xi_a\ot \id_A)\circ \ell: H\to A$, and the required idempotent is
\begin{equation}\label{idempotent}
	\mathbf{Q}=(Q_{(i,a),(j,b)}):=E(\ell_a(e_{ij})x_b),
\end{equation}
	where
 $E$ is the averaging map \eqref{average}.
 \end{rema}

Prime examples of principal comodule algebras arise from Hopf epimorphisms or {\em quantum homogeneous spaces}. 
\begin{prop}\label{prop.qhs}
Let $A$ and $H$ be Hopf algebras and assume that there exists a Hopf algebra epimorphism $\pi: A\to H$ and that $H$ admits a Haar measure $\haar: H\to \C$. View $A$ as a right $H$-comodule algebra with the coaction
\begin{equation}\label{qhs.coac}
\varrho_A: A\lra A\ot H, \qquad \varrho_A = (\id \ot \pi)\circ \Delta_A,
\end{equation}
where $\Delta_A$ is the comultiplication of $A$. Then $(A,\varrho_A)$ is a principal $H$-comodule algebra.
\end{prop}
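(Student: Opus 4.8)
The plan is to verify the two conditions in Definition~\ref{def.princ.com.alg}: bijectivity of the canonical Galois map $\can$, and faithful flatness of $A$ as a left $B$-module, where $B = A^{\mathrm{co}H}$. Rather than attack these directly, the cleanest route is to construct a strong connection form $\ell : H \to A\ot A$ and invoke Lemma~\ref{lem.strong}, which says that the existence of such an $\ell$ is equivalent to $A$ being a principal comodule algebra (so it delivers both bijectivity of $\can$ \emph{and} faithful flatness in one stroke). The candidate for $\ell$ is essentially forced by the Hopf-algebraic structure: set
\begin{equation}\label{pf.ell}
\ell = (\id_A \ot (\id_A\circ\varepsilon_H)) \text{-}\,\text{corrected lift of } (S_A \ot \id_A)\circ\Delta_A,
\end{equation}
or more precisely, pick any linear section $j : H \to A$ of $\pi$ with $j(1_H)=1_A$ (possible since $\pi$ is surjective and we work over a field), and define $\ell(h) = \sum S_A(j(h)\sw 1)\ot j(h)\sw 2$. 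One then has to repair $\ell$ so that it lands in the right place and satisfies the normalisation, splitting, and bi-colinearity axioms; the Haar measure on $H$ is exactly the tool used to average the section $j$ into one that is suitably equivariant.

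**Key steps.** First I would set up the right $H$-coaction as in \eqref{qhs.coac} and identify $B = A^{\mathrm{co}H}$ as $\{a \in A \mid \sum a\sw 1 \ot \pi(a\sw 2) = a \ot 1_H\}$. Second, I would use the Haar measure $\haar$ on $H$ to build a section of $\pi$ with good colinearity properties: concretely, exploit that $\pi$ admits a colinear (in fact bicolinear) splitting after averaging, using that $H$ is cosemisimple (which follows from the existence of a normalised integral together with bijectivity of the antipode) — so $H$ is injective as a comodule over itself and the surjection of $H$-comodules $\pi$ splits $H$-colinearly. Third, from such a splitting $s$ I would write down $\ell(h) = \sum S_A(\widetilde{s(h)}\sw 1) \ot \widetilde{s(h)}\sw 2$ for an appropriate lift $\widetilde{s(h)} \in A$ of $s(h)$, and check the four axioms \eqref{con.norm}--\eqref{con.left}: normalisation is immediate from $s(1_H)=1_A$; the splitting property \eqref{con.split} follows from the antipode axiom $\sum S_A(a\sw 1)a\sw 2 = \varepsilon_A(a)1_A$ together with $\varepsilon_H\circ\pi = \varepsilon_A$; and the two colinearity conditions \eqref{con.right}, \eqref{con.left} are where the $H$-colinearity of the chosen section is consumed, via coassociativity of $\Delta_A$ and the intertwining property of $S_A$. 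Finally, Lemma~\ref{lem.strong} concludes that $A$ is a principal $H$-comodule algebra.

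**Main obstacle.** The routine parts are the verifications of the axioms, which are standard Sweedler-notation manipulations. The genuine content — and the step I expect to be delicate — is producing a section of $\pi$ that is simultaneously \emph{unital}, \emph{right $H$-colinear}, and \emph{left $H$-colinear}, since $\ell$ must satisfy both \eqref{con.right} and \eqref{con.left}. Cosemisimplicity of $H$ gives colinearity on one side readily, but getting compatibility on both sides (and preserving the unit) typically requires either a bicolinear averaging argument or an explicit two-sided integral computation; one may instead bypass full bicolinearity of the section by defining $\ell$ as a convolution combination $\ell = (\ell_0 * \text{something})$ so that any one-sided section suffices, at the cost of a longer verification. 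Alternatively — and this is probably the slickest path — one can avoid sections entirely: since $\pi$ is a Hopf surjection, the canonical map for $A$ factors through the canonical map associated to the trivial extension $H \to H$ (which is manifestly bijective with inverse $h\ot h' \mapsto \sum hS_H(h'\sw 1)\ot h'\sw 2$), and faithful flatness of $A$ over $B$ in the quantum-homogeneous-space case is a known consequence of cosemisimplicity of $H$ (or, when $A$ itself has a Haar measure, of the existence of the averaging map \eqref{average} as a conditional expectation); invoking these, both defining properties of a principal comodule algebra hold. I would present the strong-connection construction as the primary argument, since it is self-contained and also yields the explicit $\ell$ needed later for the connection computations in Section~3.
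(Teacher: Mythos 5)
Your proposal is correct and follows essentially the same route as the paper: the paper formally attributes the result to Schneider's theorem but then exhibits exactly the strong connection you describe, namely a linear unital section $j'$ of $\pi$ averaged on both sides by the Haar measure into a bicolinear unital section $j$ (formula \eqref{inv.j}), with $\ell = (S\ot\id)\circ\Delta_A\circ j$ and Lemma~\ref{lem.strong} then delivering principality. You correctly isolate the one genuinely delicate point --- obtaining a section that is simultaneously unital, left and right colinear --- and the two-sided Haar averaging you propose is precisely the paper's resolution of it.
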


Rather than proving this proposition, which is a consequence of one of Schneider's theorems \cite{Sch:pri}, we indicate the form of a strong connection $\ell: H\to A\ot A$ in this case (see e.g.\ \cite[Theorem~4.4]{BrzMaj:geo}, \cite{BegBrz:exp}). Since $\pi$ is a $\C$-linear epimorphism it has a $\C$-linear section $j': H\to A$ such that $j'(1)=1$. Using the integral $\haar$ we define the map $j: H\to A$ by
\begin{equation}\label{inv.j}
j(h) = \sum \haar\left(h\sw 1 S\left(\pi\left(j'(h\sw 2)\sw 2\right)\right)\right)j'(h\sw 2)\sw 2\haar\left(\pi\left(j'(h\sw 2)\sw 3\right) S \left( h\sw 3 \right)\right) .
\end{equation}
Since $\haar$ is a left integral, $j$ is a left and right $H$-comodule homomorphism in the sense that the following diagrams
\begin{equation}\label{j.bicom}
\xymatrix{H\ot H \ar[dd]_{\id\ot j} && H \ar[ll]_-{\Delta_H}  \ar[rr]^-{\Delta_H} \ar[d]^j && H\ot H\ar[dd]^{j\ot \id}\\
&& A \ar[d]^{\Delta_A} &&\\
H\ot A  && A\ot A \ar[ll]^-{\pi\ot \id}   \ar[rr]_-{\id\ot \pi} && A\ot H}
\end{equation}
commute. Since $\haar(1)=1$ and $\pi$ is a coalgebra morphism, $j$ is still a normalised section of $\pi$, i.e.\
\begin{equation}\label{sec}
\pi\circ j = \id, \qquad j(1) =1.
\end{equation}
A strong connection form is given by
\begin{equation}\label{l.j}
\ell : H \lra A\ot A, \qquad \ell = (S\ot \id)\circ \Delta_A\circ j.
\end{equation}
As shown in \cite[Theorem~4.4]{BrzMaj:geo} any left-invariant strong connection form on $A$ is of the form \eqref{l.j}. If the chosen $j'$ already satisfies  \eqref{j.bicom} then $j=j'$.

If $A$ is not isomorphic to $A^{\mathrm{co} H}\ot H$ as algebras, none of the maps $j$ will be an algebra homomorphism. This does not preclude existence of elements $h,h'\in H$ such that 
\begin{equation}\label{j.mult}
j(hh') = j(h)j(h'). 
\end{equation}
On the contrary, it is often the case that $j$ can be constructed multiplicatively from a (small) number of generators of $H$. This translates to the connection form \eqref{l.j}. For any $h,h'$ for which \eqref{j.mult} holds, $\ell$ is obtained by the following `sandwiching' procedure. For any algebra $A$, define the {\em sandwich transformation},
\begin{equation}\label{sandwich}
\mathfrak{s}_A : A\ot A\ot A\ot A\lra A\ot A, \qquad a_1\ot a_2\ot a_3 \ot a_4\mapsto a_3a_1\ot a_2a_4.
\end{equation}
\begin{lemm}\label{lem.sandwich}
In the situation of Proposition~\ref{prop.qhs} and $j$ satisfying \eqref{j.bicom} and \eqref{sec}, for all $h,h'\in H$ such that \eqref{j.mult} holds, the strong connection \eqref{l.j} comes out as
\begin{equation}\label{l.sandwich}
\ell(hh') = \mathfrak{s}_A (\ell(h)\ot \ell(h')) .
\end{equation}
\end{lemm}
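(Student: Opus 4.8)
The plan is to compute $\ell(hh') = (S\ot\id)\circ\Delta_A\circ j(hh')$ directly, using the hypothesis $j(hh') = j(h)j(h')$ to split the comultiplication, and then to recognise the result as the sandwich of $\ell(h)$ and $\ell(h')$. First I would write, using \eqref{j.mult} and the fact that $\Delta_A$ is an algebra homomorphism,
\begin{equation*}
\Delta_A(j(hh')) = \Delta_A(j(h))\,\Delta_A(j(h')) = \sum j(h)\sw 1 j(h')\sw 1 \ot j(h)\sw 2 j(h')\sw 2 .
\end{equation*}
Applying $S\ot\id$ and using the anti-multiplicativity of the antipode on the first leg gives
\begin{equation*}
\ell(hh') = \sum S\bigl(j(h')\sw 1\bigr)S\bigl(j(h)\sw 1\bigr)\ot j(h)\sw 2 j(h')\sw 2 .
\end{equation*}
On the other hand, expanding $\ell(h) = \sum S(j(h)\sw 1)\ot j(h)\sw 2$ and likewise for $\ell(h')$, the definition \eqref{sandwich} of $\mathfrak{s}_A$ yields exactly
\begin{equation*}
\mathfrak{s}_A(\ell(h)\ot\ell(h')) = \sum S\bigl(j(h')\sw 1\bigr)S\bigl(j(h)\sw 1\bigr)\ot j(h)\sw 2 j(h')\sw 2 ,
\end{equation*}
so the two expressions coincide. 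This is the whole computation; it is short and essentially formal once $j$ is known to be multiplicative on $h,h'$.

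The one point that needs a small amount of care — and is the only place the bicolinearity hypothesis \eqref{j.bicom} enters — is justifying that \eqref{l.j} is the correct formula for the strong connection in the first place, i.e.\ that the $j$ under discussion genuinely produces a strong connection via $\ell = (S\ot\id)\circ\Delta_A\circ j$; but this is already recorded in the discussion preceding Lemma~\ref{lem.sandwich} (following \cite[Theorem~4.4]{BrzMaj:geo}), so I would simply invoke it. I do not expect any serious obstacle: the statement is really an observation that the antipode reversal in the first tensor leg of $\ell$ is precisely what the placement of $a_1,a_3$ versus $a_2,a_4$ in $\mathfrak{s}_A$ is designed to absorb. If anything, the only thing to be vigilant about is the order of the factors: one must check that it is $S(j(h')\sw1)S(j(h)\sw1)$ and not $S(j(h)\sw1)S(j(h')\sw1)$ that appears, which is why $\mathfrak{s}_A$ sends $a_1\ot a_2\ot a_3\ot a_4$ to $a_3a_1\ot a_2a_4$ (putting the second argument's first leg $a_3$ on the left) rather than to $a_1a_3\ot a_2a_4$.
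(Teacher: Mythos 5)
Your computation is correct and is exactly the argument the paper has in mind: its proof consists of the single observation that $\Delta_A$ is an algebra homomorphism while $S$ is an anti-algebra homomorphism, which is precisely what you spell out (including the correct ordering $S(j(h')\sw 1)S(j(h)\sw 1)$ that the definition of $\mathfrak{s}_A$ is designed to match). No gaps; your version is just a more explicit write-up of the same one-line proof.
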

\begin{proof}
The statement follows immediately from the fact that $\Delta_A$ is an algebra while $S$ is an anti-algebra homomorphism.
\end{proof}


\section{The quantum flag manifold}

\subsection{The algebra of functions on the quantum $\mathbf{SU(3)}$ group}

For $q\in(0,1)$, the $C^*$-algebra $C(SU_q(3))$ of `continuous
functions' on the quantum $SU(3)$ group is defined by Woronowicz
\cite{w3,w2} as the universal $C^*$-algebra generated by elements
$\{u_{ij}:i,j=1,2,3\}$ such that the matrix $\mathbf{u} = (u_{ij})_{i,j=1}^3$ is unitary and
\begin{equation}\label{determinant}
\sum_{i_1=1}^3\sum_{i_2=1}^3\sum_{i_3=1}^3 E_{i_1i_2i_3}
u_{j_1 i_1}u_{j_2 i_2}u_{j_3 i_3}=E_{j_1 j_2 j_3},
\;\;\;\;\; \forall(j_1,j_2,j_3)\in\{1,2,3\},
\end{equation}
where
\begin{equation}\label{inversions}
E_{i_1 i_2 i_3}=\begin{cases} (-q)^{I(i_1,i_2,i_3)} & \text{if}
\; i_r\neq i_s \; \text{for} \; r\neq s, \\ 0 & \text{otherwise,}
\end{cases}
\end{equation}
and $I(i_1,i_2,i_3)$ denotes the number of inversed pairs in the
sequence $i_1,i_2,i_3$. As pointed out by Br\c{a}giel \cite{bragiel},
$\{u_{ij}\}$ are coordinate functions of a quantum matrix
\cite{d,so,frt}. That is, the following relations are also satisfied
\begin{subequations}\label{qmatrix}
\begin{eqnarray}
u_{ij}u_{ik} & = & qu_{ik}u_{ij}, \;\;\;\;\; j<k, \label{qmatrix1} \\
u_{ji}u_{ki} & = & qu_{ki}u_{ji}, \;\;\;\;\; j<k, \label{qmatrix2} \\
u_{ij}u_{km} & = & u_{km}u_{ij}, \;\;\;\;\; i<k, \; j>m, \label{qmatrix3} \\
u_{ij}u_{km}-u_{km}u_{ij} & = & (q-q^{-1})u_{im}u_{kj},
\;\;\;\;\; i<k, \; j<m, \label{qmatrix4}
\end{eqnarray}
\end{subequations}
with $i,j,k,m\in\{1,2,3\}$. 
 The comultiplication 
 $$
 \Delta_{SU_q(3)}:C(SU_q(3))
\lra C(SU_q(3))\otimes C(SU_q(3)),
$$
 is a unital $C^*$-algebra homomorphism
such that
$$ \Delta_{SU_q(3)}(u_{ij})=\sum_{k=1}^n u_{ik}\otimes u_{kj}. $$

We denote by $\pol(SU_q(3))$ the subalgebra of $C(SU_q(3))$ generated
by $\{u_{ij}:i,j=1,2,3\}$. Since $u_{ij}^*=S(u_{ji})$, where
$S$ is the antipode (co-inverse) of $SU_q(3)$ \cite{frt,ks}, it follows
that $\pol(SU_q(3))$ is closed under the $*$-operation. 
Explicitly,
\begin{equation}\label{u*}
u_{ij}^* = (-q)^{j-i}\left(u_{i_1j_1}u_{i_2j_2} - q u_{i_1j_2}u_{i_1j_1}\right),
\end{equation}
where $i_1<i_2\in \{1,2,3\}\setminus \{i\}$ and $j_1<j_2\in \{1,2,3\}\setminus \{j\}$. 
Thus $\pol(SU_q(3))$,
the polynomial or coordinate algebra of $SU_q(3)$, is a dense $*$-subalgebra of $C(SU_q(3))$.

\subsection{The gauge action and its fixed point algebra}
The representation theory of the $C^*$-algebra $C(SU_q(3))$ has been described in detail by Br\c{a}giel in \cite{bragiel}. Six families of
irreducible representations, each indexed by elements $(\phi,\psi)$
of the $2$-torus have beed identified therein; see \cite{BrzSz:BiaProc} for a compact review. Among these, the $1$-dimensional irreducible representations $\pi_0^{\phi,\psi}$
of $C(SU_q(3))$ produce a surjective morphism of compact quantum groups
$\hat{\pi}_0:C(SU_q(3))\lra C(\T^2)$ (the diagonal embedding of $\T^2$ into $SU_q(3)$),
which gives rise to a gauge co-action of coordinate algebras 
\begin{equation}\label{hat.gamma.0}
\hat{\mu}:\pol(SU_q(3))\ra \pol(SU_q(3))\otimes
\pol(\T^2), \qquad \hat{\mu}=(\id\otimes\hat{\pi}_0)\circ \Delta_{SU_q(3)}.
\end{equation} 
Explicitly on the polynomial algebra $\pol(SU_q(3))$, $\hat{\pi}_0$ is a Hopf $*$-algebra epimorphism,
\begin{equation}\label{hat.pi.0}
\hat{\pi}_0:\pol(SU_q(3))\lra \pol(\T^2), \qquad \mathbf{u}\mapsto \begin{pmatrix}U_1 & 0 & 0\cr 0 & U_2 & 0\cr 0 & 0 & U_1^*U_2^*\end{pmatrix},
\end{equation}
where $U_1, U_2$ are unitary, group-like generators of the Hopf algebra $\pol(\T^2)$ of polynomial on $\T^2$ (the algebra of Laurent polynomials in two indeterminates). Hence the coaction comes out as
\begin{equation}\label{gamma.hat}
\hat{\mu}:\pol(SU_q(3))\ra \pol(SU_q(3))\otimes
\pol(\T^2), \qquad u_{ij}\mapsto \begin{cases} u_{ij}\tens U_j & \text{if} \; j=1,2, \\
u_{ij} \tens (U_1U_2)^{-1}  & \text{if} \; j=3. \end{cases}
\end{equation}

Equivalently,
$\mu:\T^2\lra\aut(C(SU_q(3)))$, $z\longmapsto \mu_z$,  is given by
\begin{equation}\label{gaugeaction}
\mu_z(u_{ij})=\begin{cases} z_j u_{ij} & \text{if} \; j=1,2, \\
(z_1z_2)^{-1}u_{ij} & \text{if} \; j=3. \end{cases}
\end{equation}
Here $z=(z_1,z_2)\in\T^2$ and each $z_i$ is a complex number of modulus $1$.
Let $C(SU_q(3)/\T^2)$ be the fixed point algebra of this gauge action, and let
$\pol(SU_q(3)/\T^2)=\pol(SU_q(3))\cap C(SU_q(3)/\T^2)$ be its polynomial
$*$-subalgebra, i.e.\ the subalgebra of coinvariants of $\hat{\mu}$,
\begin{equation}\label{poly.flag}
\pol(SU_q(3)/\T^2)= \pol(SU_q(3))^{\mathrm{co}\pol(\T^2)} = \{f\in  \pol(SU_q(3))\; |\; \hat{\mu}(f) = f\tens 1\}.
\end{equation}

Integration with respect to the Haar measure over $\T^2$ gives rise to
a faithful conditional expectation $\Phi:C(SU_q(3))\ra C(SU_q(3)/\T^2)$, namely
\begin{equation}\label{condexp}
\Phi(x)=\int_{z\in\T^2}\mu_z(x)dz.
\end{equation}
If $w$ is a monomial in $\{u_{ij}\}$ then $\Phi(w)$ is either $0$ or $w$.
Thus we have $\Phi(\pol(SU_q(3)))=\pol(SU_q(3)/\T^2)$, and whence
$\pol(SU_q(3)/\T^2)$ is a dense $*$-subalgebra of $C(SU_q(3)/\T^2)$.

There is a third equivalent way of understanding the gauge action, 
which we will also use: $\pol(SU_q(3))$ is a $\Z^2$-graded algebra with the degrees of the generators given by
\begin{equation}\label{grad.su3}
\deg(u_{i1}) = (1,0), \quad \deg(u_{i2}) = (0,1), \quad \deg(u_{i3}) = (-1,-1), \qquad i=1,2,3.
\end{equation}
From this point of view, $\pol(SU_q(3)/\T^2)$ is the  $(0,0)$-degree part of $\pol(SU_q(3))$. In what follows, 
we set 
\begin{equation}\label{gen.flag}
w_{ijk}=u_{i1}u_{j2}u_{k3}. 
\end{equation}


\section{The Noncommutative Sphere Bundle}
\subsection{The nontriviality of the potential quantum sphere bundle}

The classical  flag manifold $SU(3)/\T^2$ has the structure of a fibre bundle
with the base space $\C P^2$ and fibre $\C P^1\cong S^2$. Thus, it
is natural to expect that the quantum  flag manifold $SU_q(3)/\T^2$ should
have an analogous structure of a noncommutative `fibre bundle'
\begin{equation}\label{bundle}
\xymatrix{
\C P_q^1 \ar[r] & SU_q(3)/\T^2 \ar[d] \\
& \C P_q^2.}
\end{equation}
To begin with, we observe that the noncommutative bundle in question
cannot be trivial already on the topological level. Indeed, $C(\C P_q^1)$
is isomorphic to the minimal unitization of the compacts and $C(\C P_q^2)$
has the structure of an essential extension (see \cite{vs,hs}),
\begin{equation}\label{cp2extension}
0 \lra \K \lra C(\C P_q^2) \lra C(\C P_q^1) \lra 0.
\end{equation}
Thus the dual of $C(\C P_q^1)\otimes C(\C P_q^2)$ is
isomorphic to the following ordered set:
\begin{equation}
\beginpicture
\setcoordinatesystem units <1.5cm,1.5cm>
\setplotarea x from -2 to 2, y from -1 to 1

\put{$\bullet$} at 0 1
\put{$\bullet$} at 2 1
\put{$\bullet$} at -1 0
\put{$\bullet$} at 3 0
\put{$\bullet$} at 0 -1
\put{$\bullet$} at 2 -1

\setlinear
\plot 0 1 2 1 /
\plot 0 -1 2 -1 /
\plot -1 0 0 -1 /
\plot -1 0 0 1 /
\plot 0 -1 2 1 /
\plot 2 1 3 0 /
\plot 2 -1 3 0 /

\arrow <0.215cm> [.25,.75] from 1.1 1 to 1.2 1
\arrow <0.215cm> [.25,.75] from 1 -1 to 1.2 -1
\arrow <0.215cm> [.25,.75] from -.5 .5 to -.45 .55
\arrow <0.215cm> [.25,.75] from -.5 -.5 to -.45 -.55
\arrow <0.215cm> [.25,.75] from 1.65 .65 to 1.7 .7
\arrow <0.215cm> [.25,.75] from 2.5 .5 to 2.6 .4
\arrow <0.215cm> [.25,.75] from 2.5 -.5 to 2.6 -.4

\endpicture
\end{equation}

The topology on the primitive ideal space of $C(SU_q(3)/\T^2)$, described in detail for this particular case in \cite{BrzSz:BiaProc} 
and treated from a much more general perspective in \cite{StoDij:fla} and \cite{NevTus:hom}, corresponds to the following order (by inclusion):
\begin{equation}\label{core}
\beginpicture
\setcoordinatesystem units <1.5cm,1.5cm>
\setplotarea x from -2 to 2, y from -1 to 1

\put{$\bullet$} at 0 1
\put{$\bullet$} at 2 1
\put{$\bullet$} at -1 0
\put{$\bullet$} at 3 0
\put{$\bullet$} at 0 -1
\put{$\bullet$} at 2 -1

\setlinear
\plot 0 1 2 1 /
\plot 0 -1 2 -1 /
\plot -1 0 0 -1 /
\plot -1 0 0 1 /
\plot 0 1 2 -1 /
\plot 0 -1 2 1 /
\plot 2 1 3 0 /
\plot 2 -1 3 0 /

\arrow <0.215cm> [.25,.75] from 1.1 1 to 1.2 1
\arrow <0.215cm> [.25,.75] from 1 -1 to 1.2 -1
\arrow <0.215cm> [.25,.75] from -.5 .5 to -.45 .55
\arrow <0.215cm> [.25,.75] from -.5 -.5 to -.45 -.55
\arrow <0.215cm> [.25,.75] from 1.65 .65 to 1.7 .7
\arrow <0.215cm> [.25,.75] from 1.65 -.65 to 1.7 -.7
\arrow <0.215cm> [.25,.75] from 2.5 .5 to 2.6 .4
\arrow <0.215cm> [.25,.75] from 2.5 -.5 to 2.6 -.4

\put{$\rho_3$} at -1.2 0
\put{$\rho_0$} at 3.2 0
\put{$\rho_{21}$} at 0 1.2
\put{$\rho_{11}$} at 2 1.2
\put{$\rho_{22}$} at 0 -1.2
\put{$\rho_{12}$} at 2 -1.2
\endpicture
\end{equation}

In view of \cite{BrzSz:BiaProc}, the Behncke and Leptin classification
of $C^*$-algebras with a finite dual \cite{bl} (or Elliott's
classification of $AF$-algebras \cite{e}) yields that 
for all $q\in(0,1)$, $C(SU_q(3)/\T^2)$ is isomorphic to the unique
unital $C^*$-algebra which admits a one-dimensional irreducible
representation and whose dual is isomorphic (as ordered set) to (\ref{core}).
This leads to the following. 

\begin{prop}\label{triviality}
The $C^*$-algebra $C(SU_q(3)/\T^2)$ is not isomorphic to the tensor
product $C(\C P_q^1)\otimes C(\C P_q^2)$.
\end{prop}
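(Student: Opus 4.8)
The plan is to compare the two $C^*$-algebras via invariants that are insensitive to isomorphism, most naturally the primitive ideal space together with its topology, as encoded in the two Hausdorffisation-type diagrams displayed just before the statement. First I would record that the dual of a tensor product $C(\C P^1_q)\otimes C(\C P^2_q)$ is, as a topological space, the product of the two duals, since both factors are type I (indeed subhomogeneous up to an extension by compacts) and one of them, $C(\C P^1_q)$, is the minimal unitisation of the compacts; concretely the dual of $C(\C P^1_q)$ is a two-point space (an open point for $\K$ and a closed point for the character), and the dual of $C(\C P^2_q)$ is a three-point chain coming from the iterated extension \eqref{cp2extension}. Their product is the six-point ordered set drawn in the first picture. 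On the other hand, by the cited description in \cite{BrzSz:BiaProc} (and \cite{StoDij:fla}, \cite{NevTus:hom}), the primitive ideal space of $C(SU_q(3)/\T^2)$ carries the order \eqref{core}.

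The heart of the argument is then simply that these two six-point ordered sets are \emph{not} isomorphic as partially ordered sets (equivalently, the two $T_0$ topologies are not homeomorphic): in the product diagram the two `middle' points $\rho_{11}$ and $\rho_{12}$ are not comparable and each lies below only one of the two minimal-looking points, whereas in \eqref{core} there is an extra edge joining the top-left vertex to the bottom-right one (the diagonal $\plot 0 1 2 -1 /$ that is absent from the product picture), so the covering relations differ. I would make this precise by exhibiting a concrete combinatorial discrepancy that is manifestly an isomorphism invariant — for instance, counting, for each point $x$, the number of points strictly below $x$, or comparing the number of edges in the Hasse diagram (seven in \eqref{core} versus the corresponding count in the product), or noting that \eqref{core} contains a particular subposet that the product does not. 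Since an isomorphism of $C^*$-algebras induces a homeomorphism of primitive ideal spaces, and a homeomorphism of finite $T_0$ spaces is an isomorphism of the associated specialisation posets, the non-isomorphism of the posets forces $C(SU_q(3)/\T^2)\not\cong C(\C P^1_q)\otimes C(\C P^2_q)$.

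I expect the only real obstacle to be bookkeeping: one must be careful that the topology on $\prim$ of a tensor product of type I algebras really is the product topology (this needs both factors to be type I, which holds here, so that $\prim(A\otimes B)\cong\prim A\times\prim B$ as topological spaces), and one must double-check the precise shape of the two diagrams against the references rather than against the classical picture — the classical flag manifold $SU(3)/\T^2$ really is a locally trivial $\C P^1$-bundle over $\C P^2$, so the \emph{obstruction to triviality is genuinely quantum} and lives entirely in the extra incidence in \eqref{core}. A cleaner but essentially equivalent route, if one prefers an algebraic invariant, would be to use the Elliott/Behncke--Leptin classification invariant already invoked in the text: $C(SU_q(3)/\T^2)$ is the unique unital $C^*$-algebra with a one-dimensional irreducible representation whose dual is \eqref{core}, so it suffices to observe that $C(\C P^1_q)\otimes C(\C P^2_q)$ fails to have dual \eqref{core}, which again reduces to the poset comparison above. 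Either way the proof is short once the two ordered sets are in hand.
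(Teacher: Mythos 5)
Your proposal is correct and takes essentially the same route as the paper: the published proof also compares the two six-point primitive ideal spaces as finite $T_0$ spaces and distinguishes them by exactly the closure-size invariant you suggest, namely that $\prim(C(\C P_q^1)\otimes C(\C P_q^2))$ contains a point whose closure has exactly three elements while $\prim(C(SU_q(3)/\T^2))$ has no such point. (One minor bookkeeping slip: the diagram \eqref{core} has eight edges versus seven for the product poset, not seven versus something else, but this does not affect your argument since your primary invariant is the one the paper actually uses.)
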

\begin{proof}
The primitive ideal space of $C(\C P_q^1)\otimes
C(\C P_q^2)$ has six elements, just like the primitive ideal
space of $C(SU_q(3)/\T^2)$. However, the hull-kernel topologies
on these two spaces are distinct. Indeed, $\prim(C(\C P_q^1)
\otimes C(\C P_q^2))$ contains a point whose closure has exactly
three elements, while $\prim(C(SU_q(3)/\T^2))$ has no such point.
\end{proof}

The algebra $C(\C P_q^2)$ is a $C^*$-subalgebra of $C(SU_q(3)/\T^2)$ in a natural
way, as follows (cf. \cite{vs}). The $C^*$-subalgebra of
$C(SU_q(3))$ generated by the first column matrix elements of $\mathbf{u}$, i.e.\ 
$u_{11}$, $u_{21}$ and $u_{31}$, may be identified with the
$C^*$-algebra $C(S^5_q)$ of continuous functions on the quantum
$5$-sphere. This $C^*$-subalgebra is invariant under the gauge
action $\mu$ of $\T^2$ on $C(SU_q(3))$. When restricted to
 $C(S^5_q)$, $\mu$ reduces to the generator-rescaling
circle action $u_{j1}\mapsto zu_{j1}$, $z\in\T$, whose fixed
point algebra is $C(\C P_q^2)$ (cf. \cite{vs,hs}). Thus,
in the setting of the present article, we have
\begin{equation}\label{cp2}
C(\C P_q^2)=C(SU_q(3)/\T^2)\cap C^*(u_{11},u_{21},u_{31}).
\end{equation}

\subsection{The noncommutative principal $U_q(2)$-bundle} The strategy for constructing bundle \eqref{bundle} is to interpret it as a bundle associated to the principal homogenous space fibration
\begin{equation}\label{su3.homog}
\xymatrix{
U_q(2) \ar[r] & SU_q(3) \ar[d] \\
& \C P_q^2.}
\end{equation}
Recall from \cite{frt} or \cite{h} that $U_q(2)$ is a compact matrix quantum group with the $C^*$-algebra of continuous functions $C(U_q(2))$ generated densely by the generators $u,\alpha,\gamma$, organised  into a unitary matrix 
\begin{equation}\label{uq2}
\mathbf{v} =  \begin{pmatrix} u & 0 & 0 \cr
0 & \alpha & -q\gamma^*u^*\cr 
0 & \gamma & \alpha^*u^*
\end{pmatrix}.
\end{equation}
The generator $u$ is central, while
\begin{equation}\label{su2.1}
\alpha\gamma = q \gamma\alpha,  \qquad \gamma\gamma^* = \gamma^*\gamma .
\end{equation}
The unitarity of $\mathbf{v}$ implies that $u$ is unitary, while $\alpha$ and $\gamma$ satisfy the remaining $SU_q(2)$,
$q$-commutation rules
\begin{equation}\label{su2.2}
\alpha\gamma^* = q \gamma^*\alpha, \qquad \alpha^*\alpha + \gamma\gamma^* = 1, \qquad \alpha\alpha^* + q^2 \gamma\gamma^*=1;
\end{equation}
see \cite{w1}. The Hopf algebra structure is that of a unitary compact quantum group, i.e.\ 
\begin{equation}\label{Hopf.u2}
\Delta_{U_q(2)}(v_{ij}) = \sum_{k=1}^3 v_{ik}\otimes v_{kj}, \quad \epsilon(v_{ij}) = \delta_{ij}, \quad S(v_{ij}) = v_{ji}^*.
\end{equation}
 The Hopf $*$-algebra (of polynomials in $C(U_q(2))$) generated by $\alpha, \gamma$ subject to relations \eqref{su2.1}-\eqref{su2.2} and central, unitary $u$ is denoted by $\pol (U_q(2))$. Thus,    $C(U_q(2)) \cong C(SU_q(2)) \otimes C(\T)$ as algebras, but not as (algebras of functions on) quantum groups.

Similarly to $C(SU_q(3))$, the algebra  $C(U_q(2))$ admits the torus action $\nu: \T^2 \to \mathrm{Aut}(C(U_q(2)))$ (arising from the torus embedding $\T^2 \hookrightarrow U_q(2)$), which for all $z= (z_1,z_2)\in \T^2$ induces the automorphisms
\begin{equation}\label{act.u2}
\nu_z: C(U_q(2))\to C(U_q(2)), \qquad u\mapsto z_1u, \quad \alpha\mapsto z_2\alpha, \quad \gamma\mapsto z_2\gamma.
\end{equation}
This action is reflected by the $\Z^2$-grading of $\pol(U_q(2))$, compatible with the $*$-involution in the sense that if $x$ is a homogeneous element of degree $(m,n)$, then $\deg(x^*) = (-m,-n)$:
\begin{equation}\label{grading.u2}
\deg(u) =(1,0), \quad \deg(\alpha) = \deg(\gamma) = (0,1).
\end{equation}
Explicitly the existence of the torus embedding means the Hopf $*$-algebra epimorphism
\begin{equation}\label{hat.pi.1}
\hat{\pi}_1: \pol(U_q(2)) \lra \pol(\T^2), \qquad 
 \mathbf{v}\mapsto \begin{pmatrix}U_1 & 0 & 0\cr 0 & U_2 & 0\cr 0 & 0 & U_1^*U_2^*\end{pmatrix},
\end{equation}
where $U_1, U_2$ are unitary, group-like generators of the Hopf algebra $\pol(\T^2)$.  The coaction is 
\begin{equation}\label{coac.t2.u2}
\begin{aligned}
\hat{\nu} = (\id\tens \hat{\pi}_1)\circ \Delta_{U_q(2)} :\: & \pol(U_q(2))\lra \pol(U_q(2))\otimes
\pol(\T^2),  \\
& u\mapsto u\tens U_1, \quad \alpha\mapsto \alpha\tens  U_2, \quad \gamma\mapsto \gamma\tens  U_2.
\end{aligned}
\end{equation}

\begin{lemm}\label{lem.su3.u2}
The $*$-map
\begin{equation}\label{pi}
\pi : \pol (SU_q(3)) \lra \pol (U_q(2)), \qquad \mathbf{u} \mapsto  \mathbf{v},
\end{equation}
is an epimorphism of Hopf $*$-algebras compatible with the $\Z^2$-gradings \eqref{grad.su3} and \eqref{grading.u2} or, equivalently, the diagram
\begin{equation}\label{pis}
\xymatrix{\pol (SU_q(3))\ar[rr]^-\pi \ar[dr]_{\hat{\pi}_0} & & \pol (U_q(2)) \ar[dl]^{\hat{\pi}_1} \\
& \pol(\T^2) &}
\end{equation}
(where the maps $\hat{\pi}_0$, $\hat{\pi}_1$ are given by \eqref{hat.pi.0} and \eqref{hat.pi.1}, respectively) is commutative.
\end{lemm}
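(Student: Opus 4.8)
The plan is to exploit that $\pi$ is completely determined by its prescribed values $v_{ij}$ on the algebra generators $u_{ij}$ of $\pol(SU_q(3))$. Consequently the only genuine computation is to check that the matrix $\mathbf{v}$ of \eqref{uq2} satisfies the defining relations of $\pol(SU_q(3))$, so that $\pi$ is a well-defined algebra map; after that, every remaining assertion (coalgebra map, antipode, $*$, surjectivity, compatibility with the gradings, commutativity of \eqref{pis}) follows by evaluating the relevant maps on the generators. So the computational heart of the lemma lies entirely in the first step.

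\emph{Well-definedness of $\pi$.} Since $\pol(SU_q(3))$ is generated by the $u_{ij}$ subject to unitarity of $\mathbf{u}$ and the $q$-determinant relations \eqref{determinant}, it suffices to verify that $\mathbf{v}$ is unitary — which is part of the definition of $C(U_q(2))$ — and that $\mathbf{v}$ obeys \eqref{determinant}. The key observation is that $\mathbf{v}$ is block-diagonal, with a $1\times 1$ block $(u)$ and a lower-right $2\times 2$ block
\[
\mathbf{w} := \begin{pmatrix}\alpha & -q\gamma^* u^* \\ \gamma & \alpha^* u^*\end{pmatrix},
\]
which is the standard $SU_q(2)$ matrix with its second column multiplied by the central unitary $u^*$. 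Using centrality of $u$ together with \eqref{su2.1}--\eqref{su2.2}, a routine check shows that $\mathbf{w}$ is a quantum $2\times 2$ matrix whose $q$-determinant is $(\alpha\alpha^* + q^2\gamma\gamma^*)u^* = u^*$; since $(u)$ is a quantum $1\times 1$ matrix with $q$-determinant $u$, block-diagonality and centrality of $u$ give that $\mathbf{v}$ is a quantum $3\times 3$ matrix (so in particular \eqref{qmatrix} holds for the $v_{ij}$) with $q$-determinant $u\,u^* = 1$, which is exactly \eqref{determinant}. Hence $\pi$ is an algebra homomorphism.

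\emph{Remaining Hopf and $*$-structure, and surjectivity.} On generators $(\pi\ot\pi)\bigl(\Delta_{SU_q(3)}(u_{ij})\bigr) = \sum_k v_{ik}\ot v_{kj} = \Delta_{U_q(2)}(v_{ij})$ by \eqref{Hopf.u2}, and $\eps_{U_q(2)}(v_{ij}) = \delta_{ij} = \eps_{SU_q(3)}(u_{ij})$; as all maps in sight are algebra homomorphisms agreeing on generators, $\pi$ is a bialgebra, hence Hopf algebra, map, and therefore intertwines the antipodes. Because the $*$-operations on both Hopf $*$-algebras are given on matrix entries by $u_{ij}^* = S(u_{ji})$ and $v_{ij}^* = S(v_{ji})$ (see \eqref{u*} and \eqref{Hopf.u2}), we get $\pi(u_{ij}^*) = \pi\bigl(S(u_{ji})\bigr) = S(v_{ji}) = v_{ij}^* = \pi(u_{ij})^*$, so $\pi$ is a $*$-map. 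Finally $u = \pi(u_{11})$, $\alpha = \pi(u_{22})$, $\gamma = \pi(u_{32})$ lie in the image, and these generate $\pol(U_q(2))$ as a $*$-algebra, so the $*$-homomorphism $\pi$ is onto.

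\emph{Gradings and the triangle.} By \eqref{grad.su3} and \eqref{grading.u2}, $\pi$ sends the homogeneous generator $u_{i1}$ of degree $(1,0)$ to $u$ or $0$ (degree $(1,0)$), $u_{i2}$ of degree $(0,1)$ to $\alpha$, $\gamma$ or $0$ (degree $(0,1)$), and $u_{i3}$ of degree $(-1,-1)$ to $-q\gamma^* u^*$, $\alpha^* u^*$ or $0$, each of degree $(0,-1)+(-1,0) = (-1,-1)$; being an algebra map, $\pi$ is thus $\Z^2$-graded. Equivalently — since $\hat{\mu} = (\id\ot\hat{\pi}_0)\circ\Delta_{SU_q(3)}$ and $\hat{\nu} = (\id\ot\hat{\pi}_1)\circ\Delta_{U_q(2)}$ and $\pi$ is a coalgebra map — gradedness of $\pi$ is precisely commutativity of \eqref{pis}, i.e.\ $\hat{\pi}_1\circ\pi = \hat{\pi}_0$; this is checked on the $u_{ij}$, both sides being algebra maps that send $u_{11}, u_{22}, u_{33}$ to $U_1, U_2, U_1^* U_2^*$ respectively (using $\hat{\pi}_1(\gamma) = 0$, so that $\hat{\pi}_1(v_{33}) = \hat{\pi}_1(\alpha^* u^*) = U_2^* U_1^* = U_1^* U_2^*$) and all off-diagonal $u_{ij}$ to $0$. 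The only place needing more than evaluation on generators is the verification in the second step that $\mathbf{v}$ is a quantum matrix with $q$-determinant $1$, and that reduces via block-diagonality and centrality of $u$ to the standard $SU_q(2)$ relations, so it is routine rather than an obstacle.
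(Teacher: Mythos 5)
Your proof is correct and follows the same overall strategy as the paper --- the only real work is checking that $\mathbf{v}$ satisfies the defining relations of $\pol(SU_q(3))$, after which the coalgebra, antipode, $*$, grading and triangle statements all reduce to evaluation on generators --- but you organise that key verification differently. The paper checks every nontrivial instance of \eqref{qmatrix} and \eqref{inversions} by hand, case by case (the pairs $(2,2,3)$, $(3,2,3)$ for \eqref{qmatrix1}--\eqref{qmatrix2}, the single case of \eqref{qmatrix3}, the three cases of \eqref{qmatrix4}, and the index triples $(1,2,3)$, $(2,1,3)$, $(1,2,2)$, $(1,3,3)$ for the determinant relations). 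You instead exploit the block-diagonal shape of $\mathbf{v}$: the lower $2\times 2$ block is the standard $SU_q(2)$ matrix with its second column rescaled by the central unitary $u^*$, so the quantum-matrix property and the value $u^*$ of its $q$-determinant follow from \eqref{su2.1}--\eqref{su2.2}, and centrality of $u$ then propagates both facts to the full $3\times 3$ matrix. This is cleaner and more structural, and it makes transparent \emph{why} the relations hold rather than merely \emph{that} they hold. The one step you should make explicit is the passage from ``$\mathbf{v}$ is a quantum matrix with $\det_q(\mathbf{v})=1$'' to the full family \eqref{determinant}: the latter consists of relations for \emph{all} triples $(j_1,j_2,j_3)$, including repeated indices, and deducing them from \eqref{qmatrix} together with $\det_q=1$ uses the standard FRT identity $\sum_{i_1,i_2,i_3} E_{i_1i_2i_3}\,u_{j_1i_1}u_{j_2i_2}u_{j_3i_3}=E_{j_1j_2j_3}\det_q$, valid in any quantum matrix bialgebra (see e.g.\ \cite{ks}). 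That fact is standard, so this is a citation to add rather than a gap, but as written the phrase ``which is exactly \eqref{determinant}'' glosses over it; the paper sidesteps the issue by verifying the nontrivial triples directly.
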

\begin{proof}
Since both $\pol (SU_q(3))$ and $\pol (U_q(2))$ are Hopf $*$-algebras of matrix type, $\pi$ is a $*$-coalgebra morphism, obviously commuting with the antipode. Degrees of the first two columns of $\mathbf{u}$ and $\mathbf{v}$ clearly agree, and so do the third ones by the compatibility of the $\Z^2$-grading of $\pol (U_q(2))$ with the $*$-involution. It remains to prove that $\pi$ extends to the whole of $\pol (SU_q(3))$ as an algebra map. This can be done by analysing relations \eqref{inversions} and \eqref{qmatrix} and confirming that they are satisfied with all the $u_{ij}$ replaced by the $v_{ij}$. The only non-trivial choices for $(i,j,k)$ in \eqref{qmatrix1} are $(2,2,3)$ and $(3,2,3)$ (the constraint that $j<k$ forces at least one of the $v_{ij}$ or $v_{ik}$ to be zero in other cases). In the first case:
$$
v_{22}v_{23} = \alpha (-q \gamma^*u^*) = q (-q \gamma^*u^*) \alpha = q v_{23}v_{22},
$$
by the centrality of $u^*$ and by the first of equations \eqref{su2.2}. In a similar way,
$$
v_{32}v_{33} = \gamma \alpha^*u^*  = q \alpha^*u^* \gamma = q v_{33}v_{32},
$$
by the centrality of  $u^*$ and by application of the $*$-involution to the first of equations \eqref{su2.2}. The only nontrivial choices for $(i,j,k)$ in \eqref{qmatrix2} are $(2,2,3)$ and $(3,2,3)$, and then
$$
v_{22}v_{32} = \alpha\gamma = q \gamma \alpha = q v_{32}v_{22}, \qquad v_{23}v_{33} = -q \gamma^*u^*\alpha^*u^* = q\alpha^*u^*(-q \gamma^*u^*\alpha^*u^*) = v_{33}v_{23}, 
$$
by the first of equations \eqref{su2.1} and its $*$-conjugation as well as by the centrality of $u^*$. There is only one situation in which the relation \eqref{qmatrix3} is non-trivial, and it follows by the second of equations \eqref{su2.1} and the centrality of $u^*$:
$$
v_{23}v_{32} = \gamma (-q \gamma^*u^*) = -q \gamma^*u^*\gamma = v_{32}v_{23}.
$$
The last of relations \eqref{qmatrix} is nontrivial only when $(i,j)=(1,1)$ and $(k,m)=(2,2), (2,3)$ or $(i,j) = (2,2)$ and $(k,m) = (3,3)$. In the first two cases the right-hand side of \eqref{qmatrix4} vanishes and the commutation rule of the left-hand side of  \eqref{qmatrix4} follows by the centrality of $u$. In the third case,
$$
\begin{aligned}
v_{22}v_{33} - v_{33}v_{22} &= \alpha \alpha^*u^* - \alpha^*u^*\alpha = (1-q^2)\gamma\gamma^*u^*\\
& = (q-q^{-1})  (-q \gamma^*u^*)\gamma = (q-q^{-1}) v_{23}v_{32},
\end{aligned}
$$
by the combination of the second and third equations in \eqref{su2.2} and the second of \eqref{su2.1} and the centrality of $u^*$. 

It remains to check the  relations \eqref{inversions}. Because of the distribution of zeros in $\mathbf{v}$ there are only few nontrivial choices for $(j_1,j_2,j_3)$. The case of $(1,2,3)$ and its cyclic permutations follows by the centrality and unitarity of $u$ and by the third of equations \eqref{su2.2}:
$$
E_{123} v_{11}v_{22} v_{33} - E_{132} v_{11}v_{32} v_{23} = u\alpha\alpha^*u^* - qu\gamma (-q \gamma^*u^*) = \alpha\alpha^* + q^2\gamma\gamma^* =1 = E_{123}.
$$
Similarly, the case of $(j_1,j_2,j_3)=(2,1,3)$ and its cyclic permutations follows by the centrality and unitarity of $u$ and by the second of equations \eqref{su2.2}. The remaining non-trivial possibilities for the choice of $(j_1,j_2,j_3)$ are $(1,2,2)$ and $(1,3,3)$ and their cyclic permutations. In the first case (and its cycle permutations) one easily computes
$$
E_{123} v_{11}v_{22} v_{32} - E_{132} v_{11}v_{32} v_{22} = u\alpha\gamma - qu\gamma\alpha = 0,
$$
by the first of equations \eqref{su2.1}. The remaining cases follow by the $*$-conjugation of the first of equations \eqref{su2.1}.
\end{proof}

\begin{lemm}\label{lem.u2-bundle}
The right $\pol(U_q(2))$-coaction
\begin{equation}\label{coact.u2.su3}
\varrho_{SU_q(3)}: \pol(SU_q(3))\lra \pol(SU_q(3))\otimes\pol(U_q(2)), \quad \varrho_{SU_q(3)} = (\id\otimes \pi)\circ \Delta_{SU_q(3)},
\end{equation}
makes $\pol(SU_q(3)$ into a principal comodule algebra over $\pol(\C P_q^2)$.
\end{lemm}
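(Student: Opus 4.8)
The plan is to obtain principality as a direct instance of Proposition~\ref{prop.qhs} and then to identify the base. By Lemma~\ref{lem.su3.u2} the map $\pi$ of \eqref{pi} is a Hopf $*$-algebra epimorphism; $\pol(U_q(2))$, being the coordinate algebra of a compact quantum group, carries a (normalised left) Haar measure $\haar_{U_q(2)}$; and the coaction \eqref{coact.u2.su3} is precisely the coaction \eqref{qhs.coac}. Hence Proposition~\ref{prop.qhs} applies verbatim and shows that $\pol(SU_q(3))$ equipped with $\varrho_{SU_q(3)}$ is a principal $\pol(U_q(2))$-comodule algebra over its subalgebra of coinvariants $B:=\pol(SU_q(3))^{\mathrm{co}\pol(U_q(2))}$. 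The only thing left to prove is therefore the identification $B=\pol(\C P_q^2)$.

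The inclusion $\pol(\C P_q^2)\subseteq B$ is a one-line computation. Since $\pi(u_{k1})=v_{k1}=\delta_{k1}u$ with $u$ the central unitary generator of $\pol(U_q(2))$, formula \eqref{coact.u2.su3} gives $\varrho_{SU_q(3)}(u_{i1})=u_{i1}\otimes u$, and because $\varrho_{SU_q(3)}=(\id\otimes\pi)\circ\Delta_{SU_q(3)}$ is a $*$-homomorphism we also get $\varrho_{SU_q(3)}(u_{i1}^*)=u_{i1}^*\otimes u^*$. Hence $\varrho_{SU_q(3)}(u_{i1}^*u_{j1})=u_{i1}^*u_{j1}\otimes u^*u=u_{i1}^*u_{j1}\otimes 1$, so all the generators $u_{i1}^*u_{j1}$ of $\pol(\C P_q^2)$ lie in $B$.

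For the reverse inclusion I would first use the commutativity of \eqref{pis}: from $\hat{\pi}_0=\hat{\pi}_1\circ\pi$ and \eqref{hat.gamma.0} the torus coaction factors as $\hat{\mu}=(\id\otimes\hat{\pi}_1)\circ\varrho_{SU_q(3)}$, whence $B\subseteq\pol(SU_q(3))^{\mathrm{co}\pol(\T^2)}=\pol(SU_q(3)/\T^2)$; in other words every element of $B$ has $\Z^2$-degree $(0,0)$ for the grading \eqref{grad.su3}. It then remains to show that such a coinvariant already lies in the $*$-subalgebra of $\pol(SU_q(3))$ generated by $u_{11},u_{21},u_{31}$, whose degree-$(0,0)$ part equals $\pol(\C P_q^2)$ (the coordinate-algebra counterpart of \eqref{cp2}). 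To this end I would evaluate the averaging map $E=(\id\otimes\haar_{U_q(2)})\circ\varrho_{SU_q(3)}$ of \eqref{average}, which is a retraction onto $B$, so that $B=E(\pol(SU_q(3)))$, on an arbitrary monomial in the $u_{ij}$: expanding $\varrho_{SU_q(3)}$ turns this into a combination of monomials weighted by Haar integrals $\haar_{U_q(2)}$ of products of entries of the fundamental corepresentation $\mathbf v$, and — using the centrality of $u$, the splitting $C(U_q(2))\cong C(SU_q(2))\otimes C(\T)$, and the Schur orthogonality relations — one finds that the surviving contributions are exactly those in which the entries pair off, each pairing producing, via the determinant relations \eqref{determinant} and the cofactor formula \eqref{u*}, a factor $u_{i1}^*u_{j1}$; thus $E$ of every monomial, and hence every element of $B$, is a polynomial in the $u_{i1}^*u_{j1}$. (As a quicker but less self-contained alternative one could compare graded dimensions: by Peter--Weyl $\dim B=\sum_{\lambda}(\dim\lambda)\,m_0(\lambda)$, where $m_0(\lambda)$ is the multiplicity of the trivial $U_q(2)$-corepresentation in the restriction to $U_q(2)$ of the irreducible corepresentation $\lambda$ of $SU_q(3)$, and since these branching multiplicities agree with the classical ones this matches the graded dimension of $\pol(\C P_q^2)$, forcing equality in view of the inclusion above.)

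The genuinely substantive step is the reverse inclusion $B\subseteq\pol(\C P_q^2)$; everything else is formal. In the computational route the delicate part is the bookkeeping — confirming that, after averaging, the surviving monomials really do reassemble, by means of \eqref{determinant}, \eqref{qmatrix} and \eqref{u*}, into polynomials in the $u_{i1}^*u_{j1}$ and nothing else — while in the representation-theoretic route the care lies in justifying the $q$-independence of the branching rule for $SU_q(3)\downarrow U_q(2)$.
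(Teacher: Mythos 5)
Your treatment of principality itself coincides with the paper's: both reduce it to Proposition~\ref{prop.qhs} via the Hopf $*$-algebra epimorphism $\pi$ of Lemma~\ref{lem.su3.u2} and the Haar measure on $\pol(U_q(2))$, so the entire content of the lemma is the identification of the coinvariant subalgebra $B$, and there the two arguments part ways. Your easy inclusion $\pol(\C P_q^2)\subseteq B$ and the reduction $B\subseteq\pol(SU_q(3)/\T^2)$ via the commuting triangle \eqref{pis} are correct and cleanly done (the latter observation is not in the paper's proof). For the decisive reverse inclusion the paper argues structurally: it uses the exact sequence \eqref{ses} of quantum groups $\pol(U(1))\hookrightarrow\pol(U_q(2))\twoheadrightarrow\pol(SU_q(2))$ to compute the coinvariants in two stages --- first the $\pol(SU_q(2))$-coinvariants, identified with $\pol(S^5_q)$ (the $*$-algebra generated by the first column of $\mathbf{u}$) via the known homogeneous-space description of the odd quantum sphere, and then the coinvariants of the residual $\pol(U(1))$-coaction $u_{i1}\mapsto u_{i1}\otimes u$, which is exactly the circle action defining $\C P_q^2$ in \eqref{cp2}. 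This buys a short proof at the price of citing the Vaksman--Soibelman identification of $\pol(S^5_q)$ as the $SU_q(2)$-coinvariants.

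The genuine gap in your proposal sits precisely at this reverse inclusion. The computational route asserts that after averaging ``the surviving contributions are exactly those in which the entries pair off, each pairing producing a factor $u_{i1}^*u_{j1}$''; this is the whole statement to be proved, it is not established, and the description of the mechanism is not accurate. The trivial $U_q(2)$-corepresentation occurs in the restriction of $V^{\otimes 3}$, not only in $V\otimes\bar V$, because the $q$-determinant of the lower-right $2\times 2$ block of $\mathbf{v}$ equals $u^*$; concretely, $E(u_{11}u_{22}u_{33})=\frac{1}{1+q^2}\,u_{11}\left(u_{22}u_{33}-q\,u_{23}u_{32}\right)=\frac{1}{1+q^2}\,u_{11}u_{11}^*$, so a genuinely threefold product survives and lands in $\pol(\C P_q^2)$ only after the cofactor formula \eqref{u*} is invoked. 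Controlling all such contributions for an arbitrary monomial amounts to decomposing the restriction to $U_q(2)$ of arbitrary tensor powers of the fundamental corepresentation and its conjugate --- that is the hard part, and the sketch does not do it. The Peter--Weyl alternative is closer to a complete argument but rests on two unproven (if standard) inputs: the $q$-independence of the branching rule for $SU_q(3)\downarrow U_q(2)$ and the fact that $\pol(\C P_q^2)$ has the classical graded dimension. Either route could be completed, but as written the crucial step is a claim rather than a proof; the paper's exact-sequence argument is the economical way to close it.
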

\begin{proof}
Since the Hopf algebra $\pol(U_q(2))$ has an invariant integral (Haar measure), $\pol(SU_q(3))$ is a principal comodule algebra; see Proposition~\ref{prop.qhs}. It remains only to identify the coinvariants of the coaction $\varrho_{SU_q(3)}$. To this end first note that $U_q(2)$ is a middle term of a short exact sequence of quantum groups with the dual sequence of Hopf $*$-algebras
\begin{equation}\label{ses}
\xymatrix{
\pol(U(1)) \ar@{^{(}->}[r]^-j &  \pol(U_q(2)) \ar@{->>}[r]^p  & \pol(SU_q(2))\\
u \ar@{|->}[r]& u
 &
 \\
& {\begin{pmatrix} u & 0 & 0 \cr
0 & \alpha & -q\gamma^*u^*\cr 
0 & \gamma & \alpha^*u^*
\end{pmatrix}}\ar@{|->}[r] & {\begin{pmatrix} 1 & 0 & 0 \cr
0 & \alpha & -q\gamma^*\cr 
0 & \gamma & \alpha^*
\end{pmatrix}} },
\end{equation}
where $\pol(U(1)) = \pol(\T^1)$ is the $*$-algebra generated by a unitary $u$. The exactness of the sequence \eqref{ses} means first that the coaction $\varrho_{SU_q(3)}$ restricts to the $j(\pol(U(1)))$-coaction on the coinvariants of the $\pol(SU_q(2))$-coaction $(\id\otimes p)\circ \varrho_{SU_q(3)}$, and second that coinvariants of $\varrho_{SU_q(3)}$ are precisely these elements which are coinvariant under both the $\pol(SU_q(2))$ and  $j(\pol(U(1)))$-coactions. The composite $p\circ \pi : \pol(SU_q(3)) \to \pol(SU_q(2))$ is a Hopf algebra projection  which allows one to view the quantum sphere $S^5_q$ as a homogenous space of $SU_q(3)$ and thus to identify $\pol(S^5_q)$, which as was explained earlier is generated (as a $*$-algebra) by the first column of $\mathbf{u}$, as the subalgebra of $\pol(SU_q(3))$ coinvariant under the coaction $(\id \otimes p\circ \pi)\circ \Delta_{SU_q(3)} = (\id \otimes p)\circ \varrho_{SU_q(3)}$. Under $\varrho_{SU_q(3)}$ the first column of $\mathbf{u}$ transforms as $u_{i1}\mapsto u_{i1}\otimes u$, which is precisely the dual of the gauge $\T^1$-action $u_{i1}\mapsto zu_{i1}$, the fixed points of which are (polynomials on) $\C P^2_q$.
\end{proof}

Lemma~\ref{lem.u2-bundle} means that there is a quantum principal bundle \eqref{su3.homog}, thus the first aim of the strategy leading to the quantum sphere bundle  \eqref{bundle} has been achieved. Before we proceed to complete the interpretation of the quantum flag manifold as  a quantum sphere bundle over the quantum projective plane $\C P^2_q$, we digress on $K_0(\C P^2_q)$. More precisely, we show how to recover generators for $K_0(C(\C P_q^2))$ in the form of projection matrices with entries in $\pol(\C P_q^2)$ obtained in  \cite{DAL2010}, \cite{DAnLan:ant} (see also \cite{DAL2013}), from noncommutative vector bundles associated to the principal comodule algebra in Lemma~\ref{lem.u2-bundle} or bundle \eqref{bundle}.  

Let $V\suc 1$ be a one-dimensional vector space with a basis $\zeta$. The left coaction 
$\lambda_{V\suc 1}: V\suc 1\to \pol(U_q(2))\ot V\suc 1$ is defined by setting
$$
\lambda_{V\suc 1}(\zeta) = u\ot \zeta.
$$
In terminology of Remark~\ref{rem.idem} there is one coidempotent element $e_{11} = u$. In view of \eqref{fm.l.u}, a basis $\{x_a\}$ in Remark~\ref{rem.idem} can be chosen as $\{u^*_{11}, u^*_{21}, u^*_{31}\}$. Consequently $\ell_a(u) = u_{a1}$, $a=1,2,3$, and since $u_{a1}u^*_{b1}\in \pol(\C P_q^2)$, the idempotent corresponding to $\Gamma(\pol(SU_q(3), V\suc 1)$ comes out as the following $3\times 3$-matrix
\begin{equation}\label{idem.1-dim}
\mathbf{Q}\suc 1 = \left(u_{a1}u^*_{b1}\right)_{a,b=1}^3.
\end{equation}

In a similar way, let $V\suc{-1}$ be a one-dimensional vector space with a basis $\zeta$ and  left coaction $\lambda_{V\suc{-1}}(\zeta)=u^*\ot \zeta$,
i.e.\ there is one coidempotent element $e_{11} = u^*$. In view of \eqref{fm.l.u}, a basis $\{x_a\}$ in Remark~\ref{rem.idem} can be chosen as $\{u_{11}, qu_{21}, q^2u_{31}\}$. Consequently $\ell_a(u) = q^{a-1}u_{a1}$, $a=1,2,3$, and since $u^*_{a1}u_{b1}\in \pol(\C P_q^2)$, the idempotent corresponding to $\Gamma(\pol(SU_q(3), V\suc{-1})$ comes out as the following $3\times 3$-matrix
\begin{equation}\label{idem.1-dim.2}
\mathbf{Q}\suc{-1} = \left(q^{a+b-2}\,u^*_{a1}u_{b1}\right)_{a,b=1}^3.
\end{equation}
As shown in \cite[Corollary~4.2]{DAnLan:ant} together with the class of the trivial bundle (free module), the classes of 
$\mathbf{Q}\suc{\pm 1}$ form a full set of generators of $K_0(C(\C P_q^2)) \cong \Z^3$.

\begin{rema}\rm 
Let $V\suc 2$ be a two-dimensional vector space with a basis $\zeta_2, \zeta_3$ and with the coaction 
$\lambda_{V\suc 2}: V\suc 2\to \pol(U_q(2))\ot V\suc 2$ defined as 
$$
\zeta_i = \sum_{j=2}^3 v_{ij}\tens \zeta_j.
$$
In view of \eqref{fm.l}, the subspace of $\pol(SU_q(3))$ generated by the first tensorands in $\ell(v_{ij})$, $i,j=2,3$ has a basis $u^*_{ai}$, $a=1,2,3$, $i=2,3$, and hence $\ell_{ai}(v_{kl}) = \delta_{ik}u_{al}$. The Haar measure on the basis of $\pol(U_q(2))$ is given by
$$
\haar\left(\left(\gamma\gamma^*\right)^n\right) = \frac{q^2 -1}{q^{2n+2} - 1}, \qquad n=0,1,\ldots
$$
and zero elsewhere (see, e.g.\ \cite{BrzSz:BiaProc}). The procedure described in Remark~\ref{rem.idem} gives the idempotent for $\Gamma(\pol(SU_q(3), V\suc 2)$ in terms of the following $12\times 12$-matrix
\begin{equation}\label{idem.2-dim}
\begin{aligned}
\mathbf{Q}\suc 2 &= (Q\suc 2_{(iak)(jbl)}), \quad a,b=1,2,3, \; i,j,k,l = 2,3, \\
Q^\suc 2_{(iak)(jbl)} &= \frac{q^{2(3-j)}}{1+q^2} \delta_{ik}\delta_{jl} \left(u_{a2}u^*_{b2} + u_{a3}u^*_{b3}\right).
\end{aligned}
\end{equation}
Note that the entries in $\mathbf{Q}\suc 2$ are non-zero only if $i=k$ and $j=l$, thus this idempotent is equivalent to the $6\times 6$-matrix labelled by $(ia)$, $(jb)$. This matrix comprises of two identical rescaled and diagonally placed $3\times 3$ idempotent blocks
\begin{equation}\label{idem.2-dim.red}
\mathbf{\bar{Q}}\suc 2 = \left(u_{a2}u^*_{b2} + u_{a3}u^*_{b3}\right)_{a,b=1}^3.
\end{equation}
Consequently, both $\mathbf{Q}\suc 2$ and $\mathbf{\bar{Q}}\suc 2$ belong to the same $K_0$-class. Note that since the matrix $\mathbf{u}$ is unitary,
$$
\mathbf{Q}\suc 1 + \mathbf{\bar{Q}}\suc 2 = \mathbf{1}.
$$
\end{rema}

\subsection{The quantum flag manifold as a quantum sphere bundle} The $(0,0)$-degree part of $\pol(U_q(2))$ is the unital $*$-subalgebra generated by 1, $\gamma\gamma^*$ and $\alpha\gamma^*$, and thus it is the polynomial algebra of the standard Podle\'s sphere \cite{Pod:sph} or the quantum complex projective line \cite{vs}, $\pol(\C P_q^1)$. Equivalently,
\begin{equation}\label{poly.cp1}
\pol(\C P_q^1) = \pol(U_q(2))^{\mathrm{co}\pol(\T^2)} = \{a\in \pol(U_q(2))\; |\; \hat{\nu}(a)= a\ot 1\},
\end{equation}
where the coaction $\hat{\nu}$ is given by \eqref{coac.t2.u2}.
Noting that
$$
\gamma\gamma^* = -q^{-1} uv_{23}v_{32}, \qquad \alpha\gamma^* = -q^{-1}uv_{22}v_{23}, 
$$
and that
$u v_{32}v_{33}, uv_{22}v_{33} \in \pol(\C P_q^1)$, and using the fact that $\pol(U_q(2))$ is a Hopf algebra of matrix type, one easily finds,
$$
\begin{aligned}
\Delta_{U_q(2)}(\gamma\gamma^*) & = -q^{-1}\sum_{i,j=2}^3 uv_{2i}v_{3j}\otimes uv_{i3}v_{j2}  \in \pol(U_q(2))\otimes \pol(\C P_q^1), \\
\Delta_{U_q(2)}(\alpha\gamma^*) & = -q^{-1}\sum_{i,j=2}^3 uv_{2i}v_{3j}\otimes uv_{i2}v_{j3}  \in \pol(U_q(2))\otimes \pol(\C P_q^1) .
\end{aligned}
$$
Therefore, $\pol(\C P_q^1)$ is a left coideal subalgebra of $\pol(U_q(2))$, i.e.\ the comultiplication $\Delta_{U_q(2)}$ of $\pol(U_q(2))$ restricts to a left coaction on $\pol(\C P_q^1)$, and we can consider the bundle with standard fibre $\C P_q^1$ associated to the quantum principal bundle \eqref{su3.homog}, the projective module of non-commutative sections of which is the cotensor product
$$
\pol(SU_q(3))\Box_{U_q(2)}\pol(\C P_q^1).
$$

\begin{rema}\rm
Consider a classical, compact fibre bundle 
$$
\xymatrix{K \ar[r] & M\ar[d] \\ &  B.}
$$
 Then the fibre $K$ can be recovered algebraically, via the Gelfand duality, from the projection 
$M \to B$ as follows.  $C(K)$ is the largest $C^*$-algebra with the property that there exists a surjective $*$-homomorphism $C(M) \to C(K)$ which  
sends all elements of $C(B)$ to scalar multiples of the identity. Lemma \ref{lem.su3.u2} immediately implies that this is exactly the case with the 
noncommutative bundle \eqref{bundle}. Namely, viewed on the corresponding $C^*$-algebra level, the restriction of $*$-homomorphism $\pi:
C(SU_q(3))\to C(U_q(2))$ to $C(SU_q(3)/\T^2)$ is a surjective map onto $C(\C P_q^1)$ which sends all elements of $C(\C P_q^2)$ to scalars. Furthermore, 
the $C^*$-algebra $C(\C P_q^1)$ satisfies the maximality assumption as well. 
\end{rema}

The main aim of this paper is achieved in the following theorem. 

\begin{theo}\label{prop.frame}
As left $\pol(\C P_q^2)$-modules
\begin{equation}\label{iso.frame}
\pol (SU_q(3)/\T^2)\cong \pol(SU_q(3))\Box_{U_q(2)}\pol(\C P_q^1).
\end{equation}
\end{theo}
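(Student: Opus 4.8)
The plan is to realise the isomorphism \eqref{iso.frame} explicitly, by hand, as a restriction of the map $\id\otimes\eps_{U_q(2)}$, with inverse a restriction of the coaction $\varrho:=\varrho_{SU_q(3)}$ of \eqref{coact.u2.su3}. Beyond the definitions, the single structural input is the commutativity of the triangle \eqref{pis}, i.e.\ $\hat\pi_1\circ\pi=\hat\pi_0$ from Lemma~\ref{lem.su3.u2}; composing with $\id$ on the first leg of $\Delta_{SU_q(3)}$ this yields the identity
$$
\hat\mu=(\id\otimes\hat\pi_1)\circ\varrho,
$$
relating the gauge coaction \eqref{gamma.hat} to $\varrho$. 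I also use that $\pol(\C P_q^2)=\pol(SU_q(3))^{\mathrm{co}\pol(U_q(2))}$ by Lemma~\ref{lem.u2-bundle} and that $\pol(\C P_q^1)=\pol(U_q(2))^{\mathrm{co}\pol(\T^2)}$ (see \eqref{poly.cp1}) is a left coideal subalgebra of $\pol(U_q(2))$, carrying the left $\pol(U_q(2))$-coaction obtained by restricting $\Delta_{U_q(2)}$.

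First I would unravel the cotensor product. Applying $\id\otimes\id\otimes\eps_{U_q(2)}$ to the equation defining $\pol(SU_q(3))\Box_{U_q(2)}\pol(\C P_q^1)$ and using counitality shows that every element $t=\sum_i a_i\otimes c_i$ of that space equals $\varrho\big(\sum_i a_i\eps_{U_q(2)}(c_i)\big)$; conversely, coassociativity of $\varrho$ shows that $\varrho(b)$ always lies in $\pol(SU_q(3))\Box_{U_q(2)}\pol(U_q(2))$, hence in $\pol(SU_q(3))\Box_{U_q(2)}\pol(\C P_q^1)$ exactly when $\varrho(b)\in\pol(SU_q(3))\otimes\pol(\C P_q^1)$. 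Therefore $\id\otimes\eps_{U_q(2)}$ and $\varrho$ are mutually inverse bijections between $\pol(SU_q(3))\Box_{U_q(2)}\pol(\C P_q^1)$ and $\{b\in\pol(SU_q(3))\mid\varrho(b)\in\pol(SU_q(3))\otimes\pol(\C P_q^1)\}$, and since $\varrho$ is left $\pol(\C P_q^2)$-linear (being the coaction of a comodule algebra with coinvariants $\pol(\C P_q^2)$), this bijection is left $\pol(\C P_q^2)$-linear. Thus the theorem reduces to the identity of subalgebras of $\pol(SU_q(3))$
$$
\{b\mid\varrho(b)\in\pol(SU_q(3))\otimes\pol(\C P_q^1)\}=\pol(SU_q(3)/\T^2).
$$

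For the inclusion $\subseteq$, I would first note that $\pol(\C P_q^1)=\pol(U_q(2))^{\mathrm{co}\pol(\T^2)}$ forces $\hat\pi_1(c)=\eps_{U_q(2)}(c)1$ for $c\in\pol(\C P_q^1)$ (apply $\eps_{U_q(2)}\otimes\id$ to $\hat\nu(c)=c\otimes1$, with $\hat\nu$ as in \eqref{coac.t2.u2}); hence if $\varrho(b)=\sum b\sw 0\otimes b\sw 1$ with all $b\sw 1\in\pol(\C P_q^1)$, then $\hat\mu(b)=(\id\otimes\hat\pi_1)\varrho(b)=\big(\sum b\sw 0\eps_{U_q(2)}(b\sw 1)\big)\otimes1=b\otimes1$, so $b\in\pol(SU_q(3)/\T^2)$ by \eqref{poly.flag}. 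For $\supseteq$, starting from $\hat\mu(b)=b\otimes1$ I would apply $\Delta_{SU_q(3)}\otimes\id$ and then $\id\otimes\pi\otimes\id$; using $\hat\pi_0=\hat\pi_1\circ\pi$, comultiplicativity of $\pi$, and coassociativity of $\varrho$, the resulting equality reads $(\id\otimes\hat\nu)\big(\varrho(b)\big)=\varrho(b)\otimes1$. As $\pol(\C P_q^1)=\{c\mid\hat\nu(c)=c\otimes1\}$ and tensoring over $\C$ is exact, this says precisely that $\varrho(b)\in\pol(SU_q(3))\otimes\pol(\C P_q^1)$, completing the proof.

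The computations involved are routine manipulations of the (co)module axioms; the only points needing care are the bookkeeping of the Sweedler legs in the $\supseteq$ step and the (trivial but essential) fact that, because $\pol(SU_q(3))$ is flat over $\C$, membership $\varrho(b)\in\pol(SU_q(3))\otimes\pol(\C P_q^1)$ is equivalent to the equation $(\id\otimes\hat\nu)(\varrho(b))=\varrho(b)\otimes1$. Finally, I would remark that this is the quantum counterpart of the classical identification $SU(3)/\T^2\cong SU(3)\times_{U(2)}\big(U(2)/\T^2\big)$, and that the statement can alternatively be derived from associativity of cotensor products over the cosemisimple Hopf algebras $\pol(U_q(2))$ and $\pol(\T^2)$, in the form $\pol(SU_q(3))\Box_{U_q(2)}\big(\pol(U_q(2))\Box_{\T^2}\C\big)\cong\pol(SU_q(3))\Box_{\T^2}\C$ with the outer cotensor taken with respect to $\hat\mu$; the explicit route above is preferable here since it produces the module isomorphism directly and avoids invoking coflatness.
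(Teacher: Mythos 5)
Your proposal is correct, and its skeleton coincides with the paper's: in both cases the isomorphism is the restriction of $\varrho_{SU_q(3)}$, its inverse is the restriction of $\id\ot\eps_{U_q(2)}$ (extracted from the cotensor relation by counitality), left $\pol(\C P_q^2)$-linearity comes from $\varrho_{SU_q(3)}$ being left linear over its coinvariants, and the decisive structural input is the commuting triangle \eqref{pis}. The two arguments diverge only in how they establish that $\varrho_{SU_q(3)}$ carries $\pol(SU_q(3)/\T^2)$ into $\pol(SU_q(3))\ot\pol(\C P_q^1)$: the paper argues via the $\Z^2$-grading, observing that the grading of $\pol(SU_q(3))$ is defined column-wise so that $\Delta_{SU_q(3)}$ preserves the degree of the second tensorand and that $\pi$ maps the degree-$(0,0)$ part onto $\pol(\C P_q^1)$; you instead run a Sweedler-leg computation from $\hat{\mu}(b)=b\ot 1$ through $\hat\pi_0=\hat\pi_1\circ\pi$ to conclude $(\id\ot\hat\nu)(\varrho_{SU_q(3)}(b))=\varrho_{SU_q(3)}(b)\ot 1$ and then invoke \eqref{poly.cp1} together with flatness over $\C$. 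Your route is more uniformly Hopf-algebraic (the two inclusions become mirror images of each other, both hinging on \eqref{pis}) and would transfer verbatim to any tower of Hopf algebra surjections $A\to H\to K$ without reference to a grading; the paper's grading argument is shorter at that step and makes the geometric content (degree-zero goes to degree-zero) more visible. Your converse inclusion, using that $\hat\pi_1$ sends $\pol(\C P_q^1)$ to scalar multiples of $1$, is a mildly streamlined version of the paper's computation for the same implication. Both proofs are complete; the closing remark about cotensor associativity over cosemisimple Hopf algebras is a legitimate alternative, and you are right that the explicit route is preferable for producing the module map.
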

\begin{proof}
Let 
$$
\pol(SU_q(3)) = \bigoplus_{(m,n)\in \Z^2} \pol(SU_q(3))_{(m,n)},
$$
be the $\Z^2$-grading decomposition of $\pol(SU_q(3))$, so that $\pol (SU_q(3)/\T^2) = \pol(SU_q(3))_{(0,0)}$. Note that,
$$
\pi\left(\pol(SU_q(3))_{(0,0)}\right) = \pol(\C P_q^1),
$$
where $\pi$ is the degree-preserving Hopf $*$-algebra epimorphism \eqref{pi}. Since on the matrix $\mathbf{u}$ of generators of $\pol(SU_q(3))$ the grading is defined column-wise, 
$$
\Delta_{SU_q(3)} \left(\pol(SU_q(3))_{(m,n)}\right) \subset \pol(SU_q(3))\otimes \pol(SU_q(3))_{(m,n)}.
$$
In particular,
$$
\begin{aligned}
\varrho_{SU_q(3)}\left(\pol(SU_q(3)/\T^2)\right)  \subset & \pol(SU_q(3))\otimes \pi\left(\pol(SU_q(3))_{(0,0)}\right)\\
& = \pol(SU_q(3))\otimes \pol(\C P_q^1).
\end{aligned}
$$
Furthermore, since $\pi$ is a coalgebra homomorphism 
$$
\varrho_{SU_q(3)}\left(\pol(SU_q(3)/\T^2)\right)  \subset \pol(SU_q(3))\Box_{U_q(2)}\pol(\C P_q^1).
$$
Being a (counital) coaction, $\varrho_{SU_q(3)}$ is an injective map, and since  $\pol(\C P_q^2)$ is the subalgebra of coinvariants,  $\varrho_{SU_q(3)}$ is a left $\pol(\C P_q^2)$-module monomorphism. Thus the restriction  
$$
\varphi := \varrho_{SU_q(3)}\mid_{\pol(SU_q(3)/\T^2)} : \pol(SU_q(3)/\T^2)\lra \pol(SU_q(3))\Box_{U_q(2)}\pol(\C P_q^1),
$$
is a left $\pol(\C P_q^2)$-module monomorphism. We need to show that it is an epimorphism too. Any element of $\pol(SU_q(3))\Box_{U_q(2)}\pol(\C P_q^1)$ is necessarily of the form $\varrho_{SU_q(3)}(a)$, for some $a\in \pol(SU_q(3))$. Indeed, $x\in \pol(SU_q(3))\tens\pol(\C P_q^1)\subset \pol(SU_q(3))\tens\pol(U_q(2))$ is an element of  $\pol(SU_q(3))\Box_{U_q(2)}\pol(\C P_q^1)$ if and only if 
$$
(\varrho_{SU_q(3)}\tens \id)(x) = (\id\tens \Delta_{U_q(2)})(x).
$$
Applying $\id\tens\id\tens\eps_{U_q(2)}$ to both sides of this equality, in view of the linearity of all the maps involved as well as counitality of comultiplications, we thus obtain that
$$
x = \varrho_{SU_q(3)}(a), \quad \mbox{where}\quad a=\left(\id\tens \eps_{U_q(2)}\right)(x) \in \pol(SU_q(3)),
$$
as claimed.
We need to prove that $a\in \pol(SU_q(3)/\T^2)$ or, equivalently, that $\hat{\mu}(a) = a\tens 1$, where $\hat{\mu}$ is the gauge coaction \eqref{hat.gamma.0}. Since $\varrho_{SU_q(3)}(a) \in \pol(SU_q(3))\tens \pol(\C P_q^1)$,
$$
\begin{aligned}
\varrho_{SU_q(3)}(a)\tens 1 & = (\id\ot\hat{\nu})\circ \varrho_{SU_q(3)}(a)\\
&= (\id \ot \id\ot\hat{\pi}_1)\circ (\id \ot \Delta_{SU_q(3)})\circ (\id\ot\pi)\circ \Delta_{SU_q(3)}(a)\\
& = (\id \ot \id\ot\hat{\pi}_1)\circ (\id\ot\pi\ot \pi) \circ (\id \ot \Delta_{SU_q(3)})\circ \Delta_{SU_q(3)}(a)\\
&= (\id\ot\pi\ot \hat{\pi}_0) \circ (\id \ot \Delta_{SU_q(3)})\circ \Delta_{SU_q(3)}(a),
\end{aligned}
$$
where the first equality follows by the definitions of $\hat{\mu}$ and $\varrho_{SU_q(3)}$, the second one is a consequence of the fact that $\pi$ is a coalgebra morphism, while the third equality follows by \eqref{pis} in Lemma~\ref{lem.su3.u2}. Applying $\id\ot \epsilon_{U_q(2)}\ot \id$ to the just derived equality, and using the fact that $\pi$ is a coalgebra homomorphism we obtain
$$
a\ot 1 = (\id\ot \hat{\pi}_0) \circ \Delta_{SU_q(3)}(a) = \hat{\mu}(a),
$$
so that, by \eqref{poly.flag}, $a\in \pol(SU_q(3)/\T^2)$, as required.
\end{proof}

In the following lemma,  the quantum projective line $\C P_q^1$ is identified as a fibre of the bundle \eqref{bundle}. 
In passing we construct a strong connection on the principal homogeneous bundle \eqref{su3.homog}. Since, as an algebra, $\pol(U_q(2))$  is isomorphic to the tensor product $\pol(SU_q(2))\ot\pol(\T)$, its basis is the combination of bases for $\pol(SU_q(2))$ and $\pol(\T)$. We find it convenient to choose the following basis for $\pol(U_q(2))$,
\begin{equation}\label{basis}
\mathcal{B}: \quad
\begin{aligned} a_{klmn} &:= u^k \alpha^l\gamma^m(-q\gamma^*u^*)^n  = v_{11}^k\, v_{22}^l\, v_{32}^m\,v_{23}^n\, , \qquad k\in \Z, l,m, n\in \N, \cr
b_{klmn} &:=  u^k   \gamma^l(-q\gamma^*u^*)^m (\alpha^*u^*)^n=  v_{11}^k v_{32}^lv_{23}^m v_{33}^n, \quad  \quad k\in \Z, l,m \in \N, n\in \Z_+. 
\end{aligned}
 \end{equation}
\begin{lemm}\label{lem.fm.j}
Let $j: \pol(U_q(2)) \lra \pol(SU_q(3))$ be a linear transformation defined on the elements of basis $\mathcal{B}$ by
\begin{equation}\label{fm.j}
j\left(a_{klmn}\right) =  u_{11}^k\, u_{22}^l\, u_{32}^m\,u_{23}^n, \qquad j\left(b_{klmn}\right) = u_{11}^k\, u_{32}^l\,u_{23}^m\, u_{33}^n,
\end{equation}
where, for a negative $k$, $u_{11}^k$ means $u_{11}^{*-k}$. Then:
\begin{blist}
 \item $j$ is a splitting of $\pi$ \eqref{pi} that is bicolinear in the sense of diagrams \eqref{j.bicom} and hence it gives rise to a strong connection $\ell:  \pol(U_q(2)) \lra \pol(SU_q(3))\ot \pol(SU_q(3))$ through \eqref{l.j}.
 \item The map $j$ is compatible with the $\Z$-gradings \eqref{grad.su3} and \eqref{grading.u2}. In particular
 \begin{equation}\label{s2.in.fm}
 j\left(\pol(\C P_q^1)\right)\subseteq \pol(SU_q(3)/\T^2 )\cong \pol(SU_q(3))\Box_{U_q(2)}\pol(\C P_q^1).
 \end{equation}
 \end{blist}
\end{lemm}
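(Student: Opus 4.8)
The plan is to treat the two parts separately: part (b) is essentially formal once (a) is established, so the substance is showing that $j$ is a bicolinear normalised section. For the splitting property in (a), recall from Lemma~\ref{lem.su3.u2} that $\pi$ is a morphism of $*$-algebras with $\pi(\mathbf u)=\mathbf v$; hence it sends each defining monomial $j(a_{klmn})=u_{11}^k u_{22}^l u_{32}^m u_{23}^n$ to $v_{11}^k v_{22}^l v_{32}^m v_{23}^n=a_{klmn}$, and likewise $j(b_{klmn})$ to $b_{klmn}$, the case of negative $k$ using that $\pi$ preserves $*$ so that $\pi(u_{11}^*)=v_{11}^*=u^*$. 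Together with $j(a_{0000})=1$ this gives $\pi\circ j=\id$ and $j(1)=1$.

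For bicolinearity I would verify the two squares of \eqref{j.bicom} on the basis $\mathcal B$. The feature that makes this tractable is that $\pi$ annihilates the off-block generators $u_{12},u_{21},u_{13},u_{31}$, so that under $\id\otimes\pi$ the coproduct formula $\Delta_{SU_q(3)}(u_{ij})=\sum_k u_{ik}\otimes u_{kj}$ collapses to
\begin{gather*}
\varrho_{SU_q(3)}(u_{i1})=u_{i1}\otimes v_{11},\qquad \varrho_{SU_q(3)}(u_{i2})=u_{i2}\otimes v_{22}+u_{i3}\otimes v_{32},\\
\varrho_{SU_q(3)}(u_{i3})=u_{i2}\otimes v_{23}+u_{i3}\otimes v_{33},
\end{gather*}
which matches, slot by slot, the image under $j\otimes\id$ of $\Delta_{U_q(2)}(v_{i1})$, $\Delta_{U_q(2)}(v_{i2})$, $\Delta_{U_q(2)}(v_{i3})$. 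Thus right colinearity holds on the five generators $v_{ij}$, and in addition $\varrho_{SU_q(3)}$ maps the subalgebra generated by $u_{11},u_{22},u_{32},u_{23},u_{33}$ into itself tensored with $\pol(U_q(2))$. One then propagates right colinearity to all of $\mathcal B$ by induction, using that $\varrho_{SU_q(3)}$ and $\Delta_{U_q(2)}$ are algebra maps, that $j(v_{11}h)=u_{11}j(h)$ for \emph{every} $h$ (because $v_{11}=u$ is central in $\pol(U_q(2))$ and $j$ keeps $u_{11}$ leftmost), and that $j$ is multiplicative when the final PBW generator is appended on the right; left colinearity follows symmetrically by peeling PBW generators from the other end, or by transporting right colinearity through the antipodes of $\pol(SU_q(3))$ and $\pol(U_q(2))$, which are intertwined by $\pi$. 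Once both squares commute, formula \eqref{l.j} produces a strong connection by the discussion following Proposition~\ref{prop.qhs}. The main obstacle is exactly this induction: one has to choose a filtration of $\mathcal B$ — balancing the explicit $u$-powers against the $u^*$'s hidden inside $v_{23}$ and $v_{33}$ and handling the $a$- and $b$-families in parallel — for which the non-PBW-ordered first legs arising from $\Delta_{U_q(2)}$, after re-expansion in $\mathcal B$, fall within the inductive hypothesis.

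For part (b), comparing degrees generator by generator yields $\deg(u_{11})=(1,0)=\deg(v_{11})$, $\deg(u_{22})=\deg(u_{32})=(0,1)=\deg(v_{22})=\deg(v_{32})$ and, using that the grading \eqref{grading.u2} of $\pol(U_q(2))$ is $*$-compatible, $\deg(u_{23})=\deg(u_{33})=(-1,-1)=\deg(v_{23})=\deg(v_{33})$. Hence $j$ sends each PBW monomial to a monomial of equal degree, so $j\bigl(\pol(U_q(2))_{(m,n)}\bigr)\subseteq\pol(SU_q(3))_{(m,n)}$ for all $(m,n)$. Taking $(m,n)=(0,0)$ and recalling $\pol(\C P_q^1)=\pol(U_q(2))_{(0,0)}$ (by \eqref{poly.cp1} and \eqref{coac.t2.u2}) and $\pol(SU_q(3)/\T^2)=\pol(SU_q(3))_{(0,0)}$ (by \eqref{poly.flag} and \eqref{grad.su3}) gives $j(\pol(\C P_q^1))\subseteq\pol(SU_q(3)/\T^2)$, while the identification $\pol(SU_q(3)/\T^2)\cong\pol(SU_q(3))\Box_{U_q(2)}\pol(\C P_q^1)$ is Theorem~\ref{prop.frame}; this is \eqref{s2.in.fm}.
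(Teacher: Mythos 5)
Your verification of the two squares of \eqref{j.bicom} on the generators $v_{11}$ and $v_{ij}$, $i,j=2,3$, is exactly the computation the paper performs (exploiting the zeros in $\mathbf v$ so that both coproducts collapse to the same two terms), your handling of the splitting property for negative $k$ is correct, and your degree count for part (b) coincides with the paper's one-line justification. Up to that point the two arguments agree.

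The gap is in the passage from generators to all of $\mathcal B$. The paper disposes of this in one sentence --- since the coproducts and $\pi$ are algebra maps it suffices to check on generators --- the point being that $j$ sends each basis monomial to the product of the images of its factors, so that $\varrho_{SU_q(3)}\circ j$ is automatically multiplicative along the normal ordering, and what remains is that $j\ot\id$ behaves multiplicatively on the products of \emph{first legs} produced by $\Delta_{U_q(2)}$. You correctly identify this as the delicate point (``the non-PBW-ordered first legs \ldots after re-expansion in $\mathcal B$''), but you do not resolve it: you announce an induction over an unspecified filtration and stop. Moreover, one of the two tools you offer for that induction is false: $j(v_{11}h)=u_{11}j(h)$ fails whenever $h$ carries a negative power of $v_{11}$, e.g.\ $j(v_{11}\cdot v_{11}^{-1})=j(1)=1$ while $u_{11}\,j(v_{11}^{-1})=u_{11}u_{11}^{*}\neq 1$ in $\pol(SU_q(3))$ (only $\sum_k u_{1k}u_{1k}^{*}=1$ holds there). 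This is not a cosmetic slip: relations in $\pol(U_q(2))$ that use $uu^{*}=1$ are exactly the ones that do not lift along $\pi$, so the re-expansion of first legs must be organised to invoke only the $q$-commutation relations among $v_{22},v_{23},v_{32},v_{33}$ and the quantum-determinant identity $v_{22}v_{33}-qv_{23}v_{32}=u^{*}$, which do lift to \eqref{qmatrix} and \eqref{u*}. Your alternative route to left colinearity via the antipodes would additionally require $j\circ S_{U_q(2)}=S_{SU_q(3)}\circ j$, which you have not checked and which is not obvious for this $j$. As written, part (a) of your argument is complete only on generators.
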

\begin{proof}
That $\pi\circ j = \id$ is clear from \eqref{basis}, \eqref{fm.j} and \eqref{pi}. Since the coproducts as well as $\pi$ are algebra maps sufices it to check the commutativity of diagrams \eqref{j.bicom} on generators of $\pol(U_q(2))$. First let us consider the case of $v_{ij}$, $i,j,=2,3$. Then, on one hand
$$
\xymatrix{
v_{ij} \ar@{|->}[rr]^-{\Delta_{U_q(2)}} && \sum_{k=2}^3 v_{ik}\ot v_{kj}  \ar@{|->}[rr]^-{j \ot\id} && \sum_{k=2}^3 u_{ik}\ot v_{kj},
}
$$
while on the other 
$$
\xymatrix{
v_{ij} \ar@{|->}[rr]^-{\Delta_{SU_q(3)}\circ j} && \sum_{k=1}^3 u_{ik}\ot u_{kj}  \ar@{|->}[rr]^-{\id \ot\pi} && \sum_{k=1}^3 u_{ik}\ot v_{kj}=\sum_{k=2}^3 u_{ik}\ot v_{kj},
}
$$
where we have noted that for $j=2,3$, $v_{1j}=0$. Similarly, 
$$
\xymatrix{
v_{11} \ar@{|->}[rr]^-{\Delta_{U_q(2)}} &&  v_{11}\ot v_{11}  \ar@{|->}[rr]^-{j \ot\id} &&  u_{11}\ot v_{11},
}
$$
and 
$$
\xymatrix{
v_{11} \ar@{|->}[rr]^-{\Delta_{SU_q(3)}\circ j} && \sum_{k=1}^3 u_{1k}\ot u_{k1}  \ar@{|->}[rr]^-{\id \ot\pi} && \sum_{k=1}^3 u_{1k}\ot v_{k1}= u_{11}\ot v_{11},
}
$$
since $v_{k1}\neq 0$ only when $k=1$. 
This proves the commutativity of the the right-hand rectangle in \eqref{j.bicom}. The commutativity of the second rectangle is proven in a symmetric way. In that way the truth of statement (a) is established. The statement (b) follows immediately from the definitions of respective gradings and $j$ and the basis $\mathcal{B}$.
\end{proof}

In view of the Sandwich Lemma~\ref{lem.sandwich} the form of the strong connection $\ell$ \eqref{l.j} arising from the map $j$ \eqref{fm.j} is fully determined by the values of $\ell$ on the generators $v_{ij}$. Since $\mathbf{u}$ is a unitary matrix these come out as
\begin{subequations}\label{fm.l}
\begin{equation}\label{fm.l.u}
\ell(u) = \sum_{k=1}^3 u_{k1}^*\ot u_{k1}, \qquad \ell(u^*) = \sum_{k=1}^3 Su_{1k}^*\ot u_{k1}^* = \sum_{k=1}^3 q^{2(k-1)}u_{k1}\ot u_{k1}^* , 
\end{equation}
\begin{equation}\label{fm.l.v}
\ell(v_{ij}) = \sum_{k=1}^3 u_{ki}^*\ot u_{kj}, \qquad i,j=2,3,
\end{equation}
\end{subequations}
where the third equality in \eqref{fm.l.u} follows by the definition of the antipode and the formulae \eqref{u*}.
In terms of the $\pol(SU_q(2))$ generators
\begin{equation}\label{fm.l.su2}
\begin{aligned}
\ell(\alpha) &= \sum_{k=1}^3 u_{k2}^*\ot u_{k2}, \qquad \ell\left(\alpha^*\right) = \sum_{k,l=1}^3 u_{k1}^*u_{l3}^*\ot u_{l3}u_{k1},\\
\ell(\gamma) &= \sum_{k=1}^3 u_{k3}^*\ot u_{k2}, \qquad \ell\left(\gamma^*\right) = -q^{-1} \sum_{k,l=1}^3 u_{k1}^*u_{l2}^*\ot u_{l3}u_{k1},
\end{aligned}
\end{equation}
where the Sandwich Lemma~\ref{lem.sandwich} has been used.

Combining the formula \eqref{split} with the isomorphism \eqref{iso.frame} given by the restriction of the $\pol(U_q(2))$-coaction  $\varrho_{SU_q(3)} = (\id\ot \pi)\circ \Delta_{SU_q(3)}$ on $\pol(SU_q(3))$ to the quantum flag manifold we obtain the following formulae for the splitting of the multiplication map
\begin{equation}\label{split.flag}
\begin{aligned}
\sigma: \pol(SU_q(3)/\T^2) & \lra \pol(\C P_q^2) \ot \pol(SU_q(3)/\T^2), \\
 a & \longmapsto \sum a\sw 1\ell(\pi(a\sw 2)) =   \sum a\sw 1 S\left(j(\pi(a\sw 2)) \sw 1\right) \ot j(\pi(a\sw 2)) \sw 2,
 \end{aligned}
\end{equation}
where the Sweedler notation refers to the coproduct $\Delta_{SU_q(3)}$.  On the generators $w_{ijk}$ \eqref{gen.flag} of $\pol(SU_q(3)/\T^2)$, the coaction $\varrho_{SU_q(3)}$ comes out as
\begin{equation}\label{w.coact}
\begin{aligned}
\varrho_{SU_q(3)} \left(w_{ijk}\right) &= w_{ijk} \ot 1 + u_{i1}u_{j2}u_{k2} \ot uv_{22}v_{23} \\
& + u_{i1} \left(u_{j3}u_{k2} + q u_{j2}u_{k3}\right)\ot u v_{32} v_{23}
 + u_{i1}u_{j3}u_{k3} \ot u v_{32} v_{33}\\
 & = w_{ijk} \ot 1 -q u_{i1}u_{j2}u_{k2} \ot \alpha \gamma^* \\
 & - qu_{i1} \left(u_{j3}u_{k2} + q u_{j2}u_{k3}\right)\ot \gamma\gamma^*
 + u_{i1}u_{j3}u_{k3} \ot \gamma\alpha^*.
\end{aligned}
\end{equation}
Combining \eqref{w.coact} with \eqref{fm.l} and \eqref{split.flag}, equipped with the Sandwich Lemma~\ref{lem.sandwich} and using the fact that $\sum_{m=1}^3 u_{km}u^*_{lm} = \delta_{kl}$ one finds that
\begin{equation}\label{split.flag.gen}
\begin{aligned}
\sigma\left(w_{ijk}\right) &= w_{ijk}\ot 1 + u_{i1}u_{j3}\sum_{m,n=1}^3 u_{m3}^*u_{n1}^* \ot w_{nmk}\\
& + u_{i1}\sum_{l,m,n=1}^3  \left( u_{j2}u_{k2}u_{l2}^*u_{m2}^* + q u_{j2}u_{k3}u_{l2}^*u_{m3}^*  -  u_{j3}u_{k1}u_{l1}^*u_{m3}^*  \right)u_{n1}^*\ot w_{nml},
\end{aligned}
\end{equation}
and hence the connection \eqref{Levi.Civita} is
\begin{equation}\label{conn.flag.gen}
\begin{aligned}
\nabla\left(w_{ijk}\right) &= d(w_{ijk}) - u_{i1}u_{j3}\sum_{m,n=1}^3 u_{m3}^*u_{n1}^* \ot w_{nmk}\\
& - u_{i1}\sum_{l,m,n=1}^3  \left( u_{j2}u_{k2}u_{l2}^*u_{m2}^* + q u_{j2}u_{k3}u_{l2}^*u_{m3}^*  -  u_{j3}u_{k1}u_{l1}^*u_{m3}^*  \right)u_{n1}^*\ot w_{nml},
\end{aligned}
\end{equation}
where $d$ is the universal exterior derivative \eqref{univ.dif}.



\begin{thebibliography}{99}

\bibitem{akl} F. Arici, J. Kaad \& G. Landi, 
{\em Pimsner algebras and Gysin sequences from principal circle actions}, 
J. Noncommut. Geom. {\bf 10}  (2016),  29--64. 

\bibitem{BegBrz:exp} E. J.\ Beggs \& T.\ Brzezi\'nski, 
{\em An explicit formula for a strong connection}, 
Appl.\ Categor.\ Str.\  {\bf 16} (2008), 57--63.

\bibitem{BegBrz:lin} E. J.\ Beggs \& T.\ Brzezi\'nski, 
{\em Line bundles and the Thom construction in noncommutative geometry}, 
J. Noncommut.\ Geom.\ {\bf 8}  (2014),  61--105. 

\bibitem{bl} H. Behncke \& H. Leptin, 
{\em Classification of $C^*$-algebras with a finite dual}, 
J. Funct. Anal. {\bf 16} (1974), 241--257.

\bibitem{BohBrz:rel} G.\ B\"ohm \& T.\ Brzezi\'nski, 
{\em Strong connections and the relative Chern-Galois character for corings,} Int.\ Math.\ Res.\ Not.\ 2005, no.\ 42, 2579--2625.

\bibitem{bragiel} K. Br\c{a}giel, 
{\em The twisted $SU(3)$ group. Irreducible $*$-representations of the $C^*$-algebra $C(S_\mu U(3))$}, 
Lett. Math. Phys. {\bf 17} (1989), 37--44.

\bibitem{brzha0} T.~Brzezi{\'n}ski \& P.~M.~Hajac, 
{\em Galois-type extensions and equivariant projectivity}, 
arXiv:0901.0141. 

\bibitem{brzha1} T.~Brzezi{\'n}ski \& P.~M.~Hajac, 
{\em Coalgebra extensions and algebra coextensions of Galois type}, 
Comm. Algebra {\bf 27}  (1999),   1347--1367.

\bibitem{brzha2} T. Brzezi{\'n}ski \& P. M. Hajac, 
{\em The Chern-Galois character}, 
C. R. Math. Acad. Sci. Paris  {\bf 338}  (2004),  113--116. 

\bibitem{Brz:ToK}
T.~Brzezi{\'n}ski, G.~Janelidze \& T.~Maszczyk, 
{\em Galois structures,}
in: P.~M.~Hajac (Ed.), {\it Lecture notes on noncommutative geometry and quantum groups}. 
Available at \url{http://www.mimuw.edu.pl/~pwit/toknotes/toknotes.pdf}

\bibitem{brzma} T. Brzezi\'{n}ski \& S. Majid, 
{\em Quantum group gauge theory on quantum spaces}, 
Commun. Math. Phys. {\bf 157}  (1993),  591--638. Erratum: {\bf 167} (1995), 235.

\bibitem{BrzMaj:coa}
T.\ Brzezi\'{n}ski \& S.\ Majid, 
 {\em Coalgebra bundles},
	Comm.~Math.~Phys.\ {\bf 191} (1998), 467--492.

\bibitem{BrzMaj:geo} T. Brzezi\'{n}ski \& S. Majid, 
{\em Quantum geometry of algebra factorisations and coalgebra bundles}, 
Commun.\ Math.\ Phys.\ {\bf 213}  (2000),  491--521. 

\bibitem{BrzSz:BiaProc} T. Brzezi\'{n}ski \& W. Szyma\'{n}ski, 
{\em On the quantum flag manifold $SU_q(3)/\T^2$}, 
Proceedings of the 37$^{\rm th}$ Workshop on Geometric Methods in Physics (Bia{\l}owie\.{z}a, July 2018), to appear. 

\bibitem{ChaSwe:Hop} S.U.\ Chase \& M.~E.\ Sweedler, 
{\em Hopf Algebras and Galois Theory}, Lecture Notes in Mathematics 97, Springer Verlag, Heidelberg 1969.

\bibitem{c} A. Connes, 
{\em Noncommutative geometry}. 
Academic Press, Inc., San Diego, 1994. 

\bibitem{cq} J. Cuntz \& D. Quillen, 
{\em Algebra extensions and nonsingularity}, 
J. Amer. Math. Soc. {\bf 8}  (1995),   251--289. 

\bibitem{dgh} L. D\c{a}browski, H. Grosse \& P. M. Hajac, 
{\em Strong connections and Chern-Connes pairing in the Hopf-Galois theory}, 
Commun.\ Math.\ Phys.\ {\bf 220}  (2001),  301--331. 

\bibitem{DAnLan:ant}  F.\ D'Andrea \& G.\ Landi, 
{\em Anti-selfdual connections on the quantum projective plane: monopoles}, 
Commun.\ Math.\ Phys.\ {\bf 297}  (2010),  841--893. 

\bibitem{DAL2010} F. D'Andrea \& G. Landi, 
{\em Bounded and unbounded Fredholm modules for quantum projective spaces,} 
J. K-Theory {\bf 6} (2010), 231--240.

\bibitem{DAL2013} F. D'Andrea \& G. Landi, 
{\em Geometry of quantum projective spaces,}  
Noncommutative geometry and physics. 3, 373--416, Keio COE Lect. Ser. Math. Sci., 1, World Sci. Publ., Hackensack, NJ, 2013.

\bibitem{d} V. G. Drinfeld, 
{\em Quantum groups}. 
Proceedings of the International Congress of Mathematicians, Vol. 1, 2 (Berkeley, 1986), 798--820, Amer. Math. Soc., Providence, 1987.

\bibitem{e} G. A. Elliott, 
{\em On the classification of inductive limits of sequences of semisimple finite-dimensional algebras}, 
J. Algebra {\bf 38} (1976), 29--44. 

\bibitem{h} P. M. Hajac, 
{\em Strong connections on quantum principal bundles},  
Commun. Math. Phys. {\bf 182}  (1996),  579--617. 

\bibitem{HajKra:pie} P.~M.~Hajac, U.~Kr\"{a}hmer, R.~Matthes \& B.~Zieli\'nski, 
 {\em Piecewise principal comodule algebras'}, J.\ Noncommut.\ Geom.\ {\bf 5} (2011), 591--614.

\bibitem{hs} J. H. Hong \& W. Szyma\'{n}ski, 
{\em Quantum spheres and projective spaces as graph algebras}, 
Commun. Math. Phys. {\bf 232} (2002), 157--188.

\bibitem{hus} D. Husemoller, 
{\em Fibre bundles}, 3$^{\rm rd}$ ed., Graduate Texts in Mathematics, 20, Springer-Verlag, New York, 1994. 

\bibitem{ks} A. Klimyk \& K. Schm\"{u}dgen, 
Quantum groups and their representations, Springer, Berlin, 1997.

\bibitem{KreTak:Gal} H.~F.\ Kreimer \& M.\ Takeuchi, 
 {\em Hopf algebras and Galois extensions of an algebra}, Indiana Univ.\ Math.\ J.\ {\bf 30} (1981),  675--692.
 
 \bibitem{NasVan:gra} C.\ N\u ast\u asescu \& F.\ van Oystaeyen, 
 {\em Graded ring theory,} North-Holland, Amsterdam-New York, 1982. 

\bibitem{NevTus:hom} S. Neshveyev and L. Tuset, 
{\em Quantized algebras of functions on homogeneous spaces with Poisson stabilizers}, 
Comm. Math. Phys. {\bf 312} (2012),  223--250.


\bibitem{Pod:sph} P.\ Podle\'s, 
{\em Quantum spheres}, 
Lett.~Math.~Phys.~14 (1987), 193--202.

\bibitem{frt} N. Reshetikhin, L. Takhtajan \& L. Faddeev, 
{\em Quantization of Lie groups and Lie algebras}, 
Leningrad Math.\ J.\ {\bf 1} (1990), 193--226.

\bibitem{Sch:pri} 
H.-J.\ Schneider, 
{\em Principal homogeneous spaces for arbitrary Hopf
algebras}. {Israel J.\ Math.}, {\bf 72} (1990),167--195.

\bibitem{so} Y. S. Soibelman, 
{\em Irreducible representations of the algebra of functions on the quantum group $SU(n)$ and Schubert cells}, 
Dokl. Akad. Nauk SSSR {\bf 307} (1989), 41--45.

\bibitem{StoDij:fla} J. V. Stokman and M. S.  Dijkhuizen, 
{\em Quantized flag manifolds and irreducible $*$-representations}, 
Comm. Math. Phys. {\bf 203} (1999),  297--324. 

\bibitem{vs} L. L. Vaksman \& Y. S. Soibelman, 
{\em Algebra of functions on quantum $SU(n+1)$ group and odd dimensional quantum spheres}, 
Algebra i Analiz {\bf 2} (1990), 101--120.

\bibitem{w1} S. L. Woronowicz, 
{\em Twisted $SU(2)$ group. An example of a non-commutative differential calculus}, 
Publ. Res. Inst. Math. Sci. {\bf 23} (1987), 117--181.

\bibitem{w2} S. L. Woronowicz, 
{\em Compact matrix pseudogroups}, 
Commun. Math. Phys. {\bf 111} (1987), 613--665.

\bibitem{w3} S. L. Woronowicz, 
{\em Tannaka-Krein duality for compact matrix pseudogroups. Twisted $SU(N)$ groups}, 
Invent. Math. {\bf 93} (1988), 35--76.

\end{thebibliography}
\end{document}